\documentclass[12pt, oneside]{amsart}

      \usepackage{amssymb,amsthm}
      \usepackage{float}
      \usepackage{amsmath}
      \usepackage{setspace}
      \usepackage{graphicx}

      \theoremstyle{plain}
      \newtheorem{theorem}{Theorem}[section]
      \newtheorem{lemma}[theorem]{Lemma}
      \newtheorem{corollary}[theorem]{Corollary}

      \theoremstyle{definition}
      \newtheorem{definition}[theorem]{Definition}

      \theoremstyle{remark}
      \newtheorem{remark}[theorem]{Remark}
      \newtheorem{example}[theorem]{Example}

      \makeatletter
      \def\@setcopyright{}
      \def\serieslogo@{}
      \makeatother

\begin{document}

   \author{Dinesh Udar and Shiksha Saini}
   \address{Department of Applied Mathematics, Delhi Technological University, Delhi, India}
   \email{dineshudar@yahoo.com}
   \email{shiksha96saini@gmail.com}

  \title[ $\sqrt{J}$-clean rings]{$\sqrt{J}$-clean rings}

\begin{abstract}
In this paper, we study a new class of rings, called $\sqrt{J}$-clean rings. A ring in which every element can be expressed as the addition of an idempotent and an element from $\sqrt{J(R)}$ is called a $\sqrt{J}$-clean ring. Here, $\sqrt{J(R)}=\{ z\in R : z^n\in J(R) \ \mathrm{for \ some} \ n \geq 1  \}$ where, $J(R)$ is the Jacobson radical. We provide the basic properties of $\sqrt{J}$-clean rings. We also show that the class of semiboolean and nil clean rings is a proper subclass of the class of $\sqrt{J}$-clean rings, which itself is a proper subclass of clean rings. We obtain basic properties of $\sqrt{J}$-clean rings and give a characterization of $\sqrt{J}$-clean rings: a ring $R$ is a $\sqrt{J}$-clean ring iff $R/J(R)$ is a $\sqrt{J}$-clean ring and idempotents lift modulo $J(R)$. We also prove that a ring is a uniquely clean ring if and only if it is a uniquely $\sqrt{J}$-clean ring.  Finally, several matrix extensions like $T_n(R)$ and $D_n(R)$ over a $\sqrt{J}$-clean ring are explored.

\end{abstract}

   \keywords{$\sqrt{J}U$ rings, $JU$ rings, $UU$ rings, Jacobson radical, nilpotents}

   \subjclass[2010]{16N20, 16S34, 16S50, 16U60, 16U99}
   \date{\today}

   \maketitle

   \section{Introduction}

   Unless and until explicitly mentioned, we consider $R$ as an associative ring with identity $(1\neq 0)$. Let the set of all idempotents and the Jacobson radical be denoted by $Id(R)$ and $J(R)$, respectively. Intensive research has been done on clean rings and their various subclasses. Nicholson, in \cite{liftingidempotentsandexchangerings}, introduced the rings whose every element is $clean$, called clean rings. An element $a\in R$ for which there exists an idempotent $e$ and an invertible element $u$ so that $a=e+u$, is called a clean element. If this decomposition of $a$ commutes, $a$ is a strongly clean element. If the decomposition of $a$ as a clean element is unique, $a$ is called a uniquely clean element. In such cases, the ring $ R$ is known as a strongly clean ring and a uniquely clean ring, respectively.

   Next, in the decomposition of $a$, if the unit is replaced by a nilpotent, we say $a$ is a nil-clean element. The ring in which every element is nil-clean is said to be a nil-clean ring, studied by Diesl in \cite{diesl2013nilcleanrings}. A semiboolean ring or $J$-clean ring is a ring wherein each element is semiboolean; that is, for every $a\in R$, $a=e+j$, where $e\in Id(R)$ and $j\in J(R)$. Another characterization of semiboolean rings is that $R$ is a semiboolean ring if and only if $R/J(R)$ is a boolean ring and idempotents lift modulo $J(R)$. As seen in the case of clean rings, a strongly nil-clean ring (or strongly $J$-clean ring), studied by Chen in \cite{chen2010stronglyJcleanrings}, can be defined. Similarly, a uniquely nil-clean ring (uniquely $J$-clean ring) can also be defined. Both nil-clean rings and semiboolean rings are clean rings.

   Wang and Chen, in \cite{wang2012pseudo}, introduced $\sqrt{J(R)}$ as a subset of ring $R$, defined as $\sqrt{J(R)}=\{ x: x^n \in J(R) \ \mathrm{for \ some } \ n \geq 1\}$. The set $\sqrt{J(R)}$ may not be closed under the binary operation of addition as well as multiplication, and hence, need not be a subring of $R$. Also, $J(R)$ and the nilpotents of $R$ are subsets of $\sqrt{J(R)}$. Motivated by the above developments, and to refine the relations among these rings, we introduce a new class of rings: $\sqrt{J}$-clean rings:
   \begin{definition}
       Any element $x\in R$ is called a $\sqrt{J}$-clean element when it is possible to express it as an addition of an idempotent $e$ and an element $z$ of $\sqrt{J(R)}$ resulting in $x=e+z$. If each element of $R$ is a $\sqrt{J}$-clean element, then $R$ is said to be a $\sqrt{J}$-clean ring.
   \end{definition}
   Examples of $\sqrt{J}$-clean rings are Boolean rings, $2 \times 2$ matrix rings over $\mathbb{F}_2$. 

   We have organized this article as follows: The basic properties of $\sqrt{J}$-clean rings are highlighted, along with providing a characterization of $\sqrt{J}$-clean rings in Section \ref{basic results}. It is shown that semiboolean and nil clean rings are properly contained in the class of $\sqrt{J}$-clean rings, which is properly contained in the class of clean rings. In the next Section, we have studied strongly $\sqrt{J}$-clean rings. In this ring, the decomposition of element $x$ as an addition of an $e\in Id(R)$ and $z\in \sqrt{J(R)}$ satisfies the condition $ez=ze$. We show its relationship with $\sqrt{J}U$ rings, the rings for which $U(R) \subseteq 1+ \sqrt{J(R)}$, which were explored in \cite{sainiczechoslovak}. The characterization of strongly $\sqrt{J}$-clean division rings and semisimple rings is also provided. In Section \ref{uniquely sqrt J clean rings}, we show that the idempotents of a uniquely $\sqrt{J}$-clean ring are central, and their relation with Dedekind finite rings is established. We also show that a ring $R$ is a uniquely clean ring iff it is a uniquely $\sqrt{J}$-clean ring. In Section \ref{matrix rings}, we show that the ring of all $n \times n$ matrices over any $R$ cannot be a strongly $\sqrt{J}$-clean ring. We also discuss the subrings of matrix rings and triangular matrix rings. Finally, we present some conditions under which the ring of all $n\times n$ matrices over $R$ is a $\sqrt{J}$-clean ring.
   
   We will be representing $M_n(R)$, $T_n(R)$, and $D_n(R)$ as the $ n \times n$ matrix ring, upper triangular matrix ring, and upper triangular matrix rings with equal diagonal entries, respectively. Also, $C(R)$, $U(R)$, and $N(R)$ are the center, the group of invertible elements, and the set of nilpotents of $R$, respectively. Also, for any other unexplained term or definition, \cite{lam1991first} can be referred to.
     
   \section{Basic results}\label{basic results}

   We begin this section by highlighting certain basic properties of $\sqrt{J}$-clean rings:

\begin{enumerate}
    \item If $z\in \sqrt{J(R)}$, then $z$ have a $\sqrt{J}$-clean decomposition.
    \item If a $\sqrt{J}$-clean ring is abelian and $J(R)=0$, then $R$ is reduced and therefore, is a boolean ring.
\end{enumerate}

Now, we first list some properties of the elements of $\sqrt{J(R)}$ from \cite{sainiczechoslovak} and \cite{UsqrtJringsshiksha}.
\begin{lemma}\label{ppties of sqrt J(R)} In any ring R, the following holds:
    \begin{enumerate}
    \item For an element $x$ from $\sqrt{J(R)}$ and a central element $y$, $xy \in \sqrt{J(R)}$. The converse holds when $y\in U(C(R))$.
    \item The group of units and the set $\sqrt{J(R)}$ are disjoint.
    \item The set of idempotents and $\sqrt{J(R)}$ have only 0 common, i.e., $Id(R) \cap \sqrt{J(R)} = \{ 0 \}$.
\end{enumerate}
\end{lemma}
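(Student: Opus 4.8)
The plan is to treat the three assertions separately, each time unwinding the definition that $z\in\sqrt{J(R)}$ means $z^{n}\in J(R)$ for some $n\geq 1$, and invoking only the most basic facts about the Jacobson radical: it is a proper two-sided ideal, it contains no units, and $1-j$ is a unit for every $j\in J(R)$. None of the steps presents a real obstacle; the only point that needs care is the role of the centrality hypothesis in (1), which is precisely what lets a power of a product be expanded as a product of powers.

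For (1), choose $n$ with $x^{n}\in J(R)$. Since $y$ is central, $(xy)^{n}=x^{n}y^{n}$, and because $J(R)$ is an ideal and $x^{n}\in J(R)$, we get $(xy)^{n}\in J(R)$, so $xy\in\sqrt{J(R)}$. For the converse, if in addition $y\in U(C(R))$ then $y^{-1}$ is again central, so applying the implication just proved to the element $xy\in\sqrt{J(R)}$ and the central element $y^{-1}$ yields $x=(xy)y^{-1}\in\sqrt{J(R)}$.

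For (2), suppose some $u$ lies in $U(R)\cap\sqrt{J(R)}$; then $u^{n}\in J(R)$ for some $n$, but $u^{n}$ is a unit and $J(R)$ is a proper ideal, which cannot contain a unit — a contradiction, so the two sets are disjoint. For (3), first note that $0$ lies in both $Id(R)$ and $\sqrt{J(R)}$, so it remains to show nothing else does: if $e=e^{2}\in\sqrt{J(R)}$, then $e=e^{n}\in J(R)$ for the relevant $n$, hence $1-e$ is a unit; multiplying the identity $e(1-e)=0$ on the right by $(1-e)^{-1}$ forces $e=0$. This gives $Id(R)\cap\sqrt{J(R)}=\{0\}$, completing the proof.
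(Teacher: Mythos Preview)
Your proof is correct. Each of the three parts is handled cleanly: in (1) centrality of $y$ is exactly what allows $(xy)^{n}=x^{n}y^{n}$, and the converse is obtained by applying the forward direction with $y^{-1}\in C(R)$; in (2) the key fact that $J(R)$ contains no unit disposes of the intersection; and in (3) the idempotent identity $e=e^{n}$ pushes $e$ into $J(R)$, after which $1-e\in U(R)$ forces $e=0$.

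The paper itself does not prove this lemma at all: it is stated without proof as a compilation of properties quoted from \cite{sainiczechoslovak} and \cite{UsqrtJringsshiksha}. So your argument is not so much a different route as the only route on display here; it supplies the elementary verification that the paper delegates to its references.
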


\begin{lemma}\label{2 in J(R)}
       If $R$ is a $\sqrt{J}$-clean ring, then $2\in J(R)$.
   \end{lemma}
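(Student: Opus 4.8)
The plan is to apply the $\sqrt{J}$-clean decomposition to the single element $-1$ and squeeze out of it a quadratic identity that pins down the $\sqrt{J(R)}$-part, and then to run a short radical argument to strengthen the conclusion from $2\in\sqrt{J(R)}$ to $2\in J(R)$.

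First I would write $-1=e+z$ with $e^2=e$ and $z\in\sqrt{J(R)}$. Substituting $e=-1-z$ into $e^2=e$ (the scalar $-1$ is central, so the expansion has no surprise cross terms) gives $1+2z+z^2=-1-z$, that is $z^2+3z+2=0$, which factors as $(z+1)(z+2)=0$ since only powers of $z$ and integers occur. The key observation is then that $1+z$ is a unit: from $z^n\in J(R)$ one gets $(1+z)\sum_{k=0}^{n-1}(-z)^k=1-(-z)^n\in 1+J(R)\subseteq U(R)$, and the same computation on the other side shows $1+z\in U(R)$ (equivalently, $z$ maps to a nilpotent in $R/J(R)$, so $1+z$ is invertible modulo $J(R)$ and therefore invertible). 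Cancelling $1+z$ from $(z+1)(z+2)=0$ forces $z=-2$ (and incidentally $e=1$), so $2=-z\in\sqrt{J(R)}$, i.e.\ $2^n\in J(R)$ for some $n\ge 1$.

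The remaining point --- upgrading $2\in\sqrt{J(R)}$ to $2\in J(R)$ --- is the one place where a little extra thought is needed, since the computation so far only delivers a power of $2$ inside $J(R)$. Here I would exploit that $2$ is central: for any $a\in R$, $(2a)^n=2^na^n\in J(R)$ because $J(R)$ is an ideal, so $2a$ is nilpotent modulo $J(R)$, hence $1-2a$ is a unit modulo $J(R)$ and so a unit in $R$; as $a$ is arbitrary this says exactly that $2\in J(R)$. (Equivalently, $2R$ is an ideal with $(2R)^n=2^nR\subseteq J(R)$, so its image in $R/J(R)$, which has zero Jacobson radical, is a nilpotent --- hence zero --- ideal.) This completes the proof.
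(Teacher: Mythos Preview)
Your proof is correct and follows essentially the same strategy as the paper: decompose a specific scalar, pin down the idempotent part, conclude $2\in\sqrt{J(R)}$, and then use centrality of $2$ to upgrade to $2\in J(R)$. The only cosmetic difference is that the paper decomposes $2$ rather than $-1$ and observes directly that $1-e=z-1$ is then a unit (so the idempotent $1-e$ must equal $1$, forcing $e=0$), whereas you reach the same conclusion via the quadratic $(z+1)(z+2)=0$ and cancellation of the unit factor $z+1$.
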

   \begin{proof}
       Assume $R$ is a $\sqrt{J}$-clean ring. Then, there exists an idempotent $e$ and $z$, an element of $\sqrt{J(R)}$ such that $2=e+z \Rightarrow 1-e=z-1$. Hence, following Lemma \ref{ppties of sqrt J(R)}, we have, $e=0$ and therefore, $2\in \sqrt{J(R)}$. This results in $1-2^ka^k \in U(R)$, resulting in $1-2a \in U(R)$ for any element $a\in R$. Hence, $2\in J(R)$.
   \end{proof}

   \begin{lemma}\label{homorphism of sqrt J clean ring}
       The homomorphic image of a $\sqrt{J}$-clean ring is $\sqrt{J}$-clean.
   \end{lemma}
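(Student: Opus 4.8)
The plan is to work directly with a surjective ring homomorphism $\phi\colon R \to S$ and show that every element of $S$ admits the required decomposition. Given $s \in S$, choose a preimage $a \in R$ with $\phi(a) = s$. Since $R$ is $\sqrt{J}$-clean, write $a = e + z$ with $e \in Id(R)$ and $z \in \sqrt{J(R)}$, and apply $\phi$ to obtain $s = \phi(e) + \phi(z)$. The element $\phi(e)$ is clearly an idempotent of $S$, so the whole statement reduces to verifying that $\phi(z) \in \sqrt{J(S)}$.

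For this, note that by definition there is some $n \geq 1$ with $z^n \in J(R)$, and hence $\phi(z)^n = \phi(z^n) \in \phi(J(R))$. Thus it suffices to know that $\phi(J(R)) \subseteq J(S)$: once this is granted, $\phi(z)^n \in J(S)$ forces $\phi(z) \in \sqrt{J(S)}$, and $s = \phi(e) + \phi(z)$ is the desired $\sqrt{J}$-clean decomposition in $S$.

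The only genuine point is therefore the standard fact that a surjective ring homomorphism carries the Jacobson radical into the Jacobson radical. I would recall its short proof: if $w \in J(R)$ and $s, t \in S$ are arbitrary, write $s = \phi(x)$ and $t = \phi(y)$ using surjectivity; since $1 - xwy \in U(R)$ and a unital homomorphism sends units to units, $1 - s\,\phi(w)\,t = \phi(1 - xwy) \in U(S)$, and as $s,t$ were arbitrary this gives $\phi(w) \in J(S)$. Equivalently, identifying $S$ with $R/\ker\phi$, this is the inclusion $(J(R) + \ker\phi)/\ker\phi \subseteq J(R/\ker\phi)$. I do not anticipate any real obstacle here; the one thing to be careful about is that surjectivity of $\phi$ is used (without it the inclusion $\phi(J(R)) \subseteq J(S)$ may fail), and the theorem concerns homomorphic images, so surjectivity is indeed available.
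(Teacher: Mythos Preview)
Your proof is correct and follows exactly the approach the paper has in mind: the paper's proof is a single sentence noting that idempotents and elements of $\sqrt{J(R)}$ are preserved under a (surjective) ring homomorphism, and you have simply unpacked this, including the standard verification that $\phi(J(R)) \subseteq J(S)$ for surjective $\phi$. There is no substantive difference between the two arguments.
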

   \begin{proof}
   The proof is straightforward, as the idempotents and elements of $\sqrt{J(R)}$ are preserved under the homomorphism $\phi$.
   \end{proof}

   \begin{lemma}\label{R1 +R2 is sqrt J clean}
    Let $R_1$ and $R_2$ be two rings. Then $R_1 \times R_2$ is $\sqrt{J}$-clean iff $R_1$ and $R_2$ are $\sqrt{J}$-clean rings.
\end{lemma}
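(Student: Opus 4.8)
The plan is to reduce both implications to the behaviour of $J$ and $\mathit{Id}$ under finite direct products. The forward direction is immediate: each projection $\pi_i \colon R_1 \times R_2 \to R_i$ is a surjective ring homomorphism, so if $R_1 \times R_2$ is $\sqrt{J}$-clean then Lemma~\ref{homorphism of sqrt J clean ring} forces each $R_i$ to be $\sqrt{J}$-clean. So the content is in the converse.

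For the converse, first I would record the standard fact $J(R_1 \times R_2) = J(R_1) \times J(R_2)$, and then upgrade it to a description of the radical's root: I claim $\sqrt{J(R_1 \times R_2)} = \sqrt{J(R_1)} \times \sqrt{J(R_2)}$. The inclusion ``$\subseteq$'' is clear since a power of $(z_1,z_2)$ lying in $J(R_1)\times J(R_2)$ has each coordinate in the corresponding $J(R_i)$. For ``$\supseteq$'', given $z_i \in \sqrt{J(R_i)}$ with $z_i^{n_i} \in J(R_i)$, set $n = \max\{n_1,n_2\}$; since each $J(R_i)$ is an ideal, $z_i^{\,n} = z_i^{\,n_i} z_i^{\,n-n_i} \in J(R_i)$, whence $(z_1,z_2)^n = (z_1^{\,n}, z_2^{\,n}) \in J(R_1)\times J(R_2) = J(R_1 \times R_2)$.

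With this in hand the argument is routine. Take an arbitrary $(a_1,a_2) \in R_1 \times R_2$. Using that $R_i$ is $\sqrt{J}$-clean, write $a_i = e_i + z_i$ with $e_i \in \mathit{Id}(R_i)$ and $z_i \in \sqrt{J(R_i)}$. Then $(e_1,e_2) \in \mathit{Id}(R_1 \times R_2)$, $(z_1,z_2) \in \sqrt{J(R_1 \times R_2)}$ by the claim above, and $(a_1,a_2) = (e_1,e_2) + (z_1,z_2)$ is a $\sqrt{J}$-clean decomposition, so $R_1 \times R_2$ is $\sqrt{J}$-clean.

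\textbf{Main obstacle.} There is no serious obstacle here; the only point requiring a moment's care is the equality $\sqrt{J(R_1\times R_2)} = \sqrt{J(R_1)}\times\sqrt{J(R_2)}$, specifically synchronising the two nilpotency exponents by passing to their maximum and using that $J(R_i)$ absorbs multiplication. Everything else is bookkeeping with coordinatewise operations. (One could alternatively state and prove the product formula for $\sqrt{J}$ as a separate small lemma, since it will presumably be reused later.)
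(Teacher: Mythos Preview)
Your proof is correct and follows essentially the same approach as the paper: both use the projection maps together with Lemma~\ref{homorphism of sqrt J clean ring} for the forward direction, and the identity $\sqrt{J(R_1\times R_2)} = \sqrt{J(R_1)}\times\sqrt{J(R_2)}$ plus coordinatewise idempotents for the converse. You have in fact supplied more detail than the paper does, since the paper simply asserts the product formula for $\sqrt{J}$ without the exponent-synchronisation argument you spell out.
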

\begin{proof}
    For any two rings $R_1$ and $R_2$, we have $\sqrt{J({R_1 \times R_2})}= \sqrt{J(R_1)} \times \sqrt{J(R_2)}$. Also, $(e_1,e_2)$ is an idempotent in $R_1 \times R_2$ iff $e_1$ and $e_2$ are idempotent in $R_1$ and $R_2$, respectively. Hence, the proof follows from Lemma \ref{homorphism of sqrt J clean ring}.
\end{proof}

It is worthwhile noting that as $J(R)$ and $N(R)$ are the subsets of $\sqrt{J(R)}$, every semiboolean ring (or J-clean ring) and nil-clean ring is a $\sqrt{J}$-clean ring, respectively. Also, if $R$ is a $\sqrt{J}$-clean ring, it is a clean ring also, as presented below:
\begin{lemma}
Every $\sqrt{J}$-clean ring is a clean ring. 
\end{lemma}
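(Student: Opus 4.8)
The plan is to show that every $\sqrt{J}$-clean decomposition can be massaged into a clean decomposition by the usual ``complementary idempotent'' trick, using that $2\in J(R)$ in such a ring. Fix $x\in R$ and write $x=e+z$ with $e\in Id(R)$ and $z\in\sqrt{J(R)}$. Rewrite this as $x=(1-e)+(2e-1+z)$; since $1-e\in Id(R)$, it suffices to prove that $u:=2e-1+z$ is a unit.

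First I would record the elementary fact that $1-w\in U(R)$ for every $w\in\sqrt{J(R)}$. Indeed, if $w^{n}\in J(R)$, then the identity $(1-w)(1+w+\cdots+w^{n-1})=1-w^{n}=(1+w+\cdots+w^{n-1})(1-w)$ exhibits $1-w$ as having both a right and a left inverse (namely $(1+w+\cdots+w^{n-1})(1-w^{n})^{-1}$ and $(1-w^{n})^{-1}(1+w+\cdots+w^{n-1})$, using $1-w^{n}\in U(R)$), so $1-w\in U(R)$. This is essentially the computation already used in the proof of Lemma \ref{2 in J(R)}.

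Next I would observe that $\sqrt{J(R)}$ is stable under adding elements of $J(R)$: if $z\in\sqrt{J(R)}$ and $j\in J(R)$, then modulo $J(R)$ we have $\overline{z+j}=\overline{z}$, which is nilpotent in $R/J(R)$, so $(z+j)^{m}\in J(R)$ for some $m$, i.e.\ $z+j\in\sqrt{J(R)}$. Now, by Lemma \ref{2 in J(R)} we have $2\in J(R)$, hence $2e\in J(R)$, and therefore $2e+z\in\sqrt{J(R)}$. Consequently $u=2e-1+z=-\bigl(1-(2e+z)\bigr)$ is a unit by the first step, and $x=(1-e)+u$ is a clean decomposition. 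Since $x$ was arbitrary, $R$ is clean.

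The only point that requires care is that $\sqrt{J(R)}$ is not closed under multiplication, so one cannot simply factor $2e-1+z=(2e-1)\bigl(1+(2e-1)z\bigr)$ and hope $(2e-1)z\in\sqrt{J(R)}$; this fails in the noncommutative setting. Routing everything through $J(R)$ via $2\in J(R)$ keeps the argument purely additive and sidesteps this, and it is exactly what Lemma \ref{2 in J(R)} supplies.
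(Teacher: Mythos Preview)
Your proof is correct. The paper's route is shorter and does not invoke Lemma~\ref{2 in J(R)}: from a $\sqrt{J}$-clean decomposition it passes directly to a clean one via $a=(e-1)+(1+z)$, using only that $1+z=1-(-z)$ is a unit because $-z\in\sqrt{J(R)}$. (As printed there is a slip --- $e-1$ is not an idempotent; the intended argument is presumably to apply $\sqrt{J}$-cleanness to $a-1$ rather than to $a$, giving $a=e+(1+z)$ with $e$ idempotent and $1+z$ a unit.) Your argument instead keeps the decomposition $x=e+z$ of $x$ itself, rewrites it as $x=(1-e)+(2e-1+z)$, and then uses $2\in J(R)$ together with the stability of $\sqrt{J(R)}$ under addition of elements of $J(R)$ to force $2e-1+z\in U(R)$. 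This is a legitimate and instructive alternative: it illustrates how Lemma~\ref{2 in J(R)} lets one bypass the failure of $\sqrt{J(R)}$ to be multiplicatively closed, at the cost of a slightly longer argument than the paper's direct trick.
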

\begin{proof}
    If $R$ is a $\sqrt{J}$-clean ring, let $a\in R$. Then for some idempotent $e$ and $z\in \sqrt{J(R)}$, we obtain a $\sqrt{J}$-clean decomposition of $a$ as $a=e+z$. This results in $a=(e-1)+(1+z)$, where $e-1$ is an idempotent. As $-z\in \sqrt{J(R)}$, we have, $1+(-z)=1+z \in U(R)$. Hence, $R$ is a clean ring, as required.
\end{proof}

The above developments leads us to the following observation:
   \begin{equation*}
   \begin{matrix}
        \mathrm{semiboolean \ rings}& \Rightarrow & \sqrt{J}- \mathrm{clean \ rings} & \Rightarrow & \mathrm{clean \ ring}\\
        & & \Uparrow & & \\
        & & \mathrm{nil \ clean \ rings}
   \end{matrix}
   \end{equation*}

Now, we provide examples showing that the above relation is irreversible.
\begin{example}
    \begin{enumerate}
        \item If $R=\mathbb{Z}_9$, then $R$ is a clean ring, as $2\notin J(R)$, using the Lemma \ref{2 in J(R)}, $R$ is not a $\sqrt{J}$-clean ring.
        \item If $R=M_2(\mathbb{Z}_{(2)})$, then $J(R)=M_2(2 \ \mathbb{Z}_{(2)})$ and $R/J(R) \cong M_2(\mathbb{Z}_{2})$. As $\begin{pmatrix}
            0 & 1 \\ 0 & 0 
        \end{pmatrix}$ is a nilpotent matrix in $M_2(\mathbb{Z}_{2})$, $R/J(R)$ is not a boolean ring, and hence, $R$ is not a semiboolean ring. Also, observe that by following \cite[Theorem 3]{breaz2013nilcleanmatrixrings}, $M_2(\mathbb{Z}_{2})$ is a nil-clean ring. This implies $M_2(\mathbb{Z}_{2})$ is a $\sqrt{J}$-clean ring. This gives $R/J(R)$ is a $\sqrt{J}$-clean ring. By \cite[Example 23.2]{lam1991first}, as $\mathbb{Z}_{(2)}$ is a local ring, $R$ is a semi-perfect ring. Now, by following the definition of semi-perfect rings, idempotents of $R/J(R)$ can be lifted 
        to $R$. Hence, by Theorem \ref{characterzation of sqrt J clean rings II}, $R$ is a $\sqrt{J}$-clean ring. 
        \item \cite[Example 2.3]{ukranianstronglypclean} If we let $R=\mathbb{Z}_2 \times \mathbb{Z}_4 \times \mathbb{Z}_8 \times \dots$, then as $(0,2,2, 2 \dots ) \in R$ is not a nil-clean element, $R$ is not a nil-clean ring. Additionally, for every positive integer $k$, $\mathbb{Z}_{2^k}/J(\mathbb{Z}_{2^k}) \cong \mathbb{Z}_2$. Hence, every $\mathbb{Z}_{2^k}$ is a semiboolean ring and therefore, from \cite{nicholson2005cleangeneralrings}, $R$ is a semiboolean ring and hence, $R$ is a $\sqrt{J}$-clean ring.
    \end{enumerate}
\end{example}

Now, we present a characterization of $\sqrt{J}$-clean rings:

\begin{theorem}\label{characterzation of sqrt J clean rings II}
    Let $R$ be any ring. Then, $R$ is a $\sqrt{J}$-clean ring iff $R/J(R)$ is a $\sqrt{J}$-clean ring and idempotents lift modulo $J(R)$. 
\end{theorem}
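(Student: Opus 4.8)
The plan is to prove the two implications separately, using the homomorphic-image lemma (Lemma~\ref{homorphism of sqrt J clean ring}) for the easy direction and a careful lifting argument for the converse. For the forward implication, suppose $R$ is $\sqrt{J}$-clean. Then $R/J(R)$ is $\sqrt{J}$-clean as the homomorphic image of a $\sqrt{J}$-clean ring. To see that idempotents lift modulo $J(R)$, take $\bar a \in R/J(R)$ with $\bar a^2 = \bar a$; lift it arbitrarily to $a \in R$ and write $a = e + z$ with $e \in Id(R)$, $z \in \sqrt{J(R)}$. I would then argue that the image $\bar z$ of $z$ in $R/J(R)$ is again in $\sqrt{J(R/J(R))} = \overline{\sqrt{J(R)}}$, but in fact $\sqrt{J(R/J(R))}$ equals the nilpotents of $R/J(R)$ (since $J(R/J(R)) = 0$), so $\bar z$ is nilpotent and $\bar a = \bar e + \bar z$ with $\bar a$ idempotent. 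Now a standard fact — an idempotent plus a nilpotent that is "close" is handled by noting $\bar a - \bar e = \bar z \in N(R/J(R))$, and since $\bar a, \bar e$ are both idempotent with nilpotent difference, one shows $\bar a = \bar e$ (for instance, $(\bar a - \bar e)^3 = \bar a - \bar e$ by expanding, using $\bar a\bar e$ need not commute — so instead I would restrict to the fact that $\bar z = \bar a - \bar e$ lies in $\sqrt{J(R/J(R))}$ and also $\bar a$ is idempotent, then use that idempotents lift already in $R$ via $e$: actually the cleanest route is that $e$ itself is an idempotent of $R$ whose image equals $\bar a$ once we know $\bar z = 0$). The technical point to nail down is exactly why $\bar z = 0$; I expect to use that $\bar a = \bar e + \bar z$ with $\bar a^2=\bar a$, $\bar e^2 = \bar e$, so $\bar z = \bar a - \bar e$ satisfies $\bar z = \bar a\bar z + \bar z\bar a - \dots$; rather than fight non-commutativity, I would invoke that in $R/J(R)$ the element $\bar a - \bar e$ is simultaneously a difference of idempotents and lies in $\sqrt{J} = N$, and conclude $e + J(R)$ is the desired lift after adjusting — see below.

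A cleaner organization for the forward direction: it suffices to show idempotents lift mod $J(R)$. Given $\bar a$ idempotent, pick any preimage $a$; then $a^2 - a \in J(R)$. Write $a = e + z$ with $e \in Id(R)$, $z \in \sqrt{J(R)}$. Passing to $R/J(R)$, $\bar a = \bar e + \bar z$; since $\bar a^2 = \bar a$ and $\bar e^2 = \bar e$, compute $\bar z^2 + \bar e\bar z + \bar z \bar e = \bar z$, hence $\bar z(1 - \bar z) = \bar e\bar z + \bar z\bar e - \bar z\bar z \cdot$(reorganize); the point is $\bar z \in J(R/J(R)) = 0$ would finish it, but $\bar z$ is only known nilpotent. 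Instead I would use Lemma~\ref{ppties of sqrt J(R)}(3): in $R/J(R)$ we still have $Id \cap \sqrt{J} = \{0\}$. From $\bar a = \bar e + \bar z$ with $\bar a$ idempotent, one shows $\bar a - \bar e \in \sqrt{J(R/J(R))}$; if additionally $\bar a$ and $\bar e$ commuted we would get $\bar a - \bar e$ idempotent-times-unit and conclude it is $0$. To get commutativity, replace $e$ by the idempotent $e' = e + e z e - e z$ or use the classical trick that produces from $e$ an idempotent $f$ with $f - a \in \sqrt{J(R)}$ and $fa = af$; then $\bar f = \bar a$ in $R/J(R)$ and we are done. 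This "$a$-associated idempotent" manipulation is the routine but fiddly part.

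For the converse, assume $R/J(R)$ is $\sqrt{J}$-clean and idempotents lift modulo $J(R)$. Let $a \in R$. In $R/J(R)$ write $\bar a = \bar e + \bar z$ with $\bar e \in Id(R/J(R))$ and $\bar z \in \sqrt{J(R/J(R))}$, so $\bar z^n \in J(R/J(R)) = 0$, i.e.\ $\bar z$ is nilpotent. Lift $\bar e$ to an idempotent $e \in Id(R)$ using the hypothesis. Then $a - e$ maps to $\bar z$ in $R/J(R)$, so $(a-e)^n \in J(R)$ for the same $n$, which means precisely $z := a - e \in \sqrt{J(R)}$. Hence $a = e + z$ is a $\sqrt{J}$-clean decomposition and $R$ is $\sqrt{J}$-clean. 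This direction is clean and is where the definition of $\sqrt{J(R)}$ via "some power lands in $J(R)$" does exactly the work we need.

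The main obstacle is the forward direction's claim that idempotents lift: the subtlety is that from a $\sqrt{J}$-clean decomposition $a = e + z$ one does not immediately get that $\bar e$ equals $\bar a$ in $R/J(R)$, because $\bar z$ is only nilpotent, not zero. I expect to resolve this either by the standard idempotent-refinement lemma (replacing $e$ by an idempotent $f$ with $f \equiv a \pmod{\sqrt{J(R)}}$ and $[f, a] = 0$, after which $\bar f - \bar a$ is a commuting idempotent-nilpotent pair, forcing $\bar f = \bar a$), or by directly invoking that $R$ clean plus the structure already forces idempotent lifting; I would go with the refinement-lemma route and spell out only the key identities, citing Lemma~\ref{ppties of sqrt J(R)} for the $Id \cap \sqrt{J} = \{0\}$ fact that kills the difference.
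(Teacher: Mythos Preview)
Your converse direction is correct and is essentially the paper's argument, stated a bit more explicitly: you observe that $\sqrt{J(R/J(R))} = N(R/J(R))$, lift $\bar e$ to an idempotent $e \in R$, and conclude $(a-e)^n \in J(R)$, i.e.\ $a-e \in \sqrt{J(R)}$.

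The forward direction, however, has a genuine gap. You correctly locate the difficulty: from $a = e + z$ with $a^2 - a \in J(R)$, the image $\bar z \in R/J(R)$ is only nilpotent, not zero, so $\bar e$ need not equal $\bar a$ (and indeed it need not: in $M_2(\mathbb F_2)$ an idempotent can be written as a different idempotent plus a nonzero nilpotent). But none of your proposed fixes closes the gap. The expansion you begin does not force $\bar z = 0$. The element $e' = e + eze - ez$ is an idempotent, but you give no reason it commutes with $a$, and in general it does not. The ``classical trick'' and ``refinement lemma'' you invoke are left unspecified; the standard polynomial lifting of idempotents requires $a^2 - a$ to be nilpotent, which is not available here since $a^2 - a$ is only known to lie in $J(R)$.

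The paper resolves this with a concrete conjugation that you are missing. Put $u = 1 - z$, a unit because $z \in \sqrt{J(R)}$. Expanding $a - a^2$ gives $(1-z)z + (1-z)e - e(1-z) - 2ez$; since $a - a^2 \in J(R)$ and $2 \in J(R)$ by Lemma~\ref{2 in J(R)}, one obtains $uz + ue - eu \in J(R)$, and left-multiplying by $u^{-1}$ yields $a - u^{-1}eu \in J(R)$. Thus $u^{-1}eu$ is an idempotent of $R$ lifting $\bar a$. The two ideas you are missing are (i) conjugating $e$ by the unit $1-z$ rather than trying to make $\bar z$ vanish or to force commutativity, and (ii) using $2 \in J(R)$ to kill the cross term $2ez$.
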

       \begin{proof}
When $R$ is a $\sqrt{J}$-clean ring, following Lemma \ref{homorphism of sqrt J clean ring}, $R/J(R)$ is a $\sqrt{J}$-clean ring. To prove that idempotents lift modulo $J(R)$, for any element $x\in R$, let $x=f+z$, where $f$ is an idempotent and $z\in \sqrt{J(R)}$. Let $\bar{x} \in R/J(R)$ be an idempotent. Therefore, $\bar{x^2}-\bar{x}=\bar{0} ~ \Rightarrow x-x^2 \in J(R)$. If we are able to show that an idempotent $e$ exists such that $x-e \in J(R)$, then we are done. Observe that $x-x^2=(f+z)-(f+z)^2 ~ \Rightarrow x-x^2 = f+z-f-zf-fz-z^2$. Hence, 
\begin{equation*}
    x-x^2=(1-z)z+(1-z)f-f(1-z)-f2z\in J(R).
\end{equation*}
By letting $1-z=u$, we obtain $uz+uf-fu \in J(R)$. Also, as $z\in \sqrt{J(R)}$, $u = 1-z \in U(R)$. Therefore, $z+f-u^{-1}fu \in J(R) \Rightarrow x-u^{-1}fu \in J(R)$. If $e=u^{-1}fu$, we have $x-e \in J(R)$, as required. 

On the contrary, now let $R/J(R)$ be a $\sqrt{J}$-clean ring and idempotents lift modulo $J(R)$. Hence, for any $\bar{x}\in R/J(R)$, let $\bar{x}=\bar{e}+\bar{z}$, where $\bar{e}\in Id(R/J(R))$ and $\bar{z}\in \sqrt{J(R/J(R))}$. As idempotents lift modulo $J(R)$ and also the elements of $\sqrt{J((R/J(R))}$ lift modulo to $\sqrt{J(R)}$, we can take $x=e+z$, where $e\in Id(R)$ and $z\in \sqrt{J(R)}$. Hence, $R$ is a $\sqrt{J}$-clean ring.
   \end{proof}

\begin{lemma}\label{when does sqrt J(R) implies J(R)}
    For any $\sqrt{J}$-clean ring $R$, if $R$ is abelian, then $J(R)=\sqrt{J(R)}$.
\end{lemma}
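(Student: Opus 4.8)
The statement to prove is: for an abelian $\sqrt{J}$-clean ring $R$, we have $J(R) = \sqrt{J(R)}$. Since the inclusion $J(R) \subseteq \sqrt{J(R)}$ is immediate from the definition of $\sqrt{J(R)}$, the whole content is the reverse inclusion $\sqrt{J(R)} \subseteq J(R)$. So the plan is to take an arbitrary $z \in \sqrt{J(R)}$ and show $z \in J(R)$.

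Let me think about this carefully. Take $z$ with $z^n \in J(R)$ for some $n \geq 1$. I want $z \in J(R)$. The standard way to show an element lies in $J(R)$ is to show $1 - rz \in U(R)$ for all $r \in R$ (right quasi-regularity); equivalently $1+z \in U(R)$ together with the right ideal $zR$ being quasi-regular, etc. But here we have more structure: $R$ is abelian (all idempotents central), so $R/J(R)$ is also abelian, and by Theorem \ref{characterzation of sqrt J clean rings II}, $R/J(R)$ is $\sqrt{J}$-clean.

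Here is the cleanest route. Since the property I want to prove is really a statement about $\bar z := z + J(R)$ being $0$ in $R/J(R)$, I pass to $S := R/J(R)$, which is an abelian $\sqrt{J}$-clean ring with $J(S) = 0$. Then I invoke observation (2) listed at the start of Section \ref{basic results}: an abelian $\sqrt{J}$-clean ring with zero Jacobson radical is reduced (hence Boolean). In a reduced ring there are no nonzero nilpotents, so $\sqrt{J(S)} = \{x : x^n = 0 \text{ some } n\} = \{0\}$. Now given $z \in \sqrt{J(R)}$, I have $z^n \in J(R)$, so $\bar z^{\,n} = 0$ in $S$, i.e. $\bar z \in \sqrt{J(S)} = \{0\}$, whence $z \in J(R)$. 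This gives $\sqrt{J(R)} \subseteq J(R)$ and completes the proof. I should double-check that $\sqrt{J(R/J(R))}$ really equals the image of $\sqrt{J(R)}$ — actually I only need the easy direction: if $w \in R$ and $w^n \in J(R)$ then $\bar w \in \sqrt{J(R/J(R))}$, and conversely $\bar w \in \sqrt{J(S)}$ with $J(S)=0$ means $\bar w$ is nilpotent, which lifts to $w^n \in J(R)$, i.e. $w \in \sqrt{J(R)}$; either way the correspondence is fine for what I need.

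The main obstacle — really the only nontrivial input — is observation (2): that an abelian $\sqrt{J}$-clean ring with $J = 0$ is reduced. If that is taken as already established (it is listed among the basic properties), the rest is bookkeeping. If I had to reprove it inline, the argument would be: in an abelian ring nilpotents lie in every minimal prime / the ring has a well-behaved nilradical, and a $\sqrt{J}$-clean decomposition of a nilpotent $z$ forces its idempotent part to be $0$ by Lemma \ref{ppties of sqrt J(R)}(3), so $z \in \sqrt{J(R)} = \sqrt{0}$ already; combined with cleanness and $J=0$ one pins down that $R$ is Boolean. But since (2) is available, I will simply cite it, apply Theorem \ref{characterzation of sqrt J clean rings II} to get $S$ $\sqrt{J}$-clean, note $S$ is reduced, and conclude $\sqrt{J(R)} \subseteq J(R)$.
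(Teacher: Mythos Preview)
Your reduction to $S=R/J(R)$ is clean and the logic is sound, but be careful: observation~(2) at the start of Section~\ref{basic results} is exactly the $J(R)=0$ special case of Lemma~\ref{when does sqrt J(R) implies J(R)} itself, and the paper never proves it. So as written your argument defers the entire content to a statement that is not established independently in the text; you should either supply a freestanding proof of~(2) or note explicitly that it follows from Diesl's characterization of strongly nil-clean rings (since $J(S)=0$ forces $\sqrt{J(S)}=N(S)$, so $S$ is abelian nil-clean, hence strongly nil-clean, hence Boolean). Also, you assert that $S$ is abelian without justification: this requires that idempotents of $S$ lift to $R$ (Theorem~\ref{characterzation of sqrt J clean rings II}), where they are central by hypothesis, so their images are central in $S$. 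Once these two points are patched your proof is complete.

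The paper's argument is entirely different. It works directly in $R$ with no passage to a quotient: assuming $z\in\sqrt{J(R)}\setminus J(R)$, it uses that $\sqrt{J}$-clean rings are potent to extract a nonzero idempotent $e=zr$ from the right ideal $zR\nsubseteq J(R)$; then, because $R$ (hence the corner $eRe$) is Dedekind finite, the equation $(eze)(ere)=e$ forces $eze$ to be a unit in $eRe$, contradicting $eze\in\sqrt{J(eRe)}$. Your route is more conceptual and modular (general case reduces to the semiprimitive case, which is handled by known nil-clean theory), while the paper's route is more self-contained within the framework developed here and avoids invoking the unproved observation~(2).
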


\begin{proof}
    Given that $R$ is abelian, $R$ is a Dedekind finite ring. As every $\sqrt{J}$-clean ring is a potent ring, a non-zero idempotent $e$ exists in a right ideal $I \nsubseteq J(R)$. Suppose $z$ is any non-zero element in $\sqrt{J(R)}$ and $I=zR$. Then $e=zr$, where $r\in R$. Then, as $R$ is a Dedekind finite ring, so is $eRe$, and hence, $eze$ is invertible. However, as $z\in \sqrt{J(R)}$, by \cite{sainiczechoslovak}, $eze \in e \sqrt{J(R)} e$, which is not possible.
\end{proof}

\section{strongly $\sqrt{J}$-clean rings}\label{strongly sqrt J clean rings}

We start this Section by defining \textit{strongly} $\sqrt{J}$-clean rings. A $\sqrt{J}$-clean ring in which the decomposition of every element as a sum of an idempotent and an element from $\sqrt{J(R)}$ commutes is called a strongly $\sqrt{J}$-clean ring. Recall that a ring, wherein $U(R) \subseteq 1+ \sqrt{J(R)}$ is called a $\sqrt{J}U$ ring.

\begin{lemma}\label{sqrt J U ring}
    In a ring $R$, a unit u is strongly $\sqrt{J}$-clean if and only if $u$ is expressible as $1+z$, for some element $z\in \sqrt{J(R)}$.
\end{lemma}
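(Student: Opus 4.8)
The plan is to treat the two implications separately; the ``if'' direction is essentially immediate, and the substance lies in the ``only if'' direction, which I would handle by passing to a corner ring.

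For the ``if'' direction, suppose $u=1+z$ with $z\in\sqrt{J(R)}$. I would first record the standard fact $1+\sqrt{J(R)}\subseteq U(R)$: if $z^n\in J(R)$, then $1-z^n=(1-z)(1+z+\cdots+z^{n-1})=(1+z+\cdots+z^{n-1})(1-z)$ is a unit, so $1-z$ is a two-sided unit, and replacing $z$ by $-z\in\sqrt{J(R)}$ gives $1+z\in U(R)$. Hence $u$ is a unit, and since the idempotent $1$ commutes with $z$, the equality $u=1+z$ is itself a strongly $\sqrt{J}$-clean decomposition of $u$. Nothing more is needed here.

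For the ``only if'' direction, let $u$ be a unit admitting a commuting decomposition $u=e+z$ with $e\in Id(R)$, $z\in\sqrt{J(R)}$ and $ez=ze$; put $f=1-e$, and the aim is to force $f=0$, which yields $u=1+z$. Since $u$ commutes with $e$, it commutes with $f$, and multiplying $uf=fu$ on both sides by $u^{-1}$ shows $u^{-1}$ commutes with $f$ as well; a short computation then gives that $fuf$ is invertible in the corner ring $fRf$, with inverse $fu^{-1}f$ — this is exactly the place where the ``strongly'' hypothesis enters. On the other hand, $fe=0$ forces $fuf=fzf$, and from $fz=zf$ one gets $(fzf)^m=fz^mf$ for all $m\geq 1$; choosing $m$ with $z^m\in J(R)$ and using $J(fRf)=fJ(R)f$ shows $fzf\in\sqrt{J(fRf)}$. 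So if $f\neq 0$, the ring $fRf$ has identity $f\neq 0$ and contains the element $fuf=fzf$ lying in both $U(fRf)$ and $\sqrt{J(fRf)}$, contradicting Lemma \ref{ppties of sqrt J(R)}(2) applied inside $fRf$. Therefore $f=0$, i.e.\ $e=1$ and $u=1+z$.

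The only delicate point — the ``hard part,'' such as it is — is the bookkeeping for the two corner-ring assertions ($fuf\in U(fRf)$, where commutativity of the decomposition is used, and $fzf\in\sqrt{J(fRf)}$ via $J(fRf)=fJ(R)f$), together with the harmless remark that the contradiction step is vacuous precisely when $f=0$, which is exactly the conclusion sought. The remaining manipulations are routine.
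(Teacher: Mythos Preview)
Your argument is correct, and it follows a genuinely different route from the paper's. For the ``only if'' direction the paper works entirely inside $R$: from $e=(u-z)^2$ it writes $e=v+z^{2}$ and then squares repeatedly to reach $e=w+z^{2^{k}}$ with $z^{2^{k}}\in J(R)$; once the remaining term $w$ is seen to be a unit, $e$ itself is a unit idempotent and hence $e=1$. Your approach instead localises to the corner $fRf$ with $f=1-e$: the commuting hypothesis makes $fuf$ a unit there, while $fuf=fzf$ lands in $\sqrt{J(fRf)}$ via $J(fRf)=fJ(R)f$, and the disjointness of units and $\sqrt{J}$ forces $f=0$. Your version is more structural and avoids the iterative bookkeeping (and the somewhat opaque claim that the intermediate $v$'s are units); the paper's version stays elementary and never leaves $R$. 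Either way the ``if'' direction is the same one-line observation that $u=1+z$ with $1\cdot z=z\cdot 1$ is already a strongly $\sqrt{J}$-clean decomposition.
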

\begin{proof}
    Let unit $u$ be a strongly $\sqrt{J}$-clean element satisfying $u=e+z$, where $e$ is an idempotent, $z\in \sqrt{J(R)}$ and $ez=ze$. Then, as $e$ is idempotent, \begin{equation*}
        e=u^2(1-2u^{-1}z)+z^2,
    \end{equation*}
where $v=u^2(1-2u^{-1}z)$ is a unit. On squaring the above equation repeatedly, after some $k$ steps, we obtain $z^{2k}\in J(R)$ such that $e=w+z^{2k}$, where $w$ is invertible. This results in $e$ as a unit. Hence, $e\in U(R) \cap Id(R)$ and thus $e=1$, as required. If $u=1+z$, for some $z\in \sqrt{J(R)}$, then this is a $\sqrt{J}$-clean decomposition of $u$ with $1.z=z.1$ and hence, $u$ is a strongly $\sqrt{J}$-clean ring.
\end{proof}

\begin{lemma}\label{strongly sqrt J clean ring is sqrt JU}
    Every strongly $\sqrt{J}$-clean ring is a $\sqrt{J}U$ ring.
\end{lemma}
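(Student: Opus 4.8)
The plan is to reduce the statement directly to Lemma~\ref{sqrt J U ring}. Let $R$ be a strongly $\sqrt{J}$-clean ring and fix an arbitrary unit $u\in U(R)$. By the definition of a strongly $\sqrt{J}$-clean ring, every element of $R$ — in particular $u$ — admits a decomposition $u=e+z$ with $e\in Id(R)$, $z\in\sqrt{J(R)}$, and $ez=ze$; that is, $u$ is a strongly $\sqrt{J}$-clean element of $R$.

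Next I would invoke Lemma~\ref{sqrt J U ring}, which states that a unit is a strongly $\sqrt{J}$-clean element precisely when it can be written as $1+z$ for some $z\in\sqrt{J(R)}$. Applying this to our $u$ yields $u\in 1+\sqrt{J(R)}$. Since $u$ was an arbitrary unit, we conclude $U(R)\subseteq 1+\sqrt{J(R)}$, which is exactly the defining condition of a $\sqrt{J}U$ ring. Hence $R$ is a $\sqrt{J}U$ ring.

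There is essentially no obstacle here: the real content has already been isolated in Lemma~\ref{sqrt J U ring}, and the only thing to observe is that in a strongly $\sqrt{J}$-clean ring the commuting decomposition $x=e+z$ is available for \emph{every} element, hence for every unit. The only point worth spelling out is that the hypothesis is applied to the unit simply viewed as a ring element, so no additional work (for instance, controlling the idempotent part of the decomposition) is required beyond what Lemma~\ref{sqrt J U ring} already supplies.
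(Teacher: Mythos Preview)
Your argument is correct and matches the paper's approach: the paper states Lemma~\ref{strongly sqrt J clean ring is sqrt JU} without proof, treating it as an immediate consequence of Lemma~\ref{sqrt J U ring}, which is exactly the reduction you carry out.
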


\begin{remark}
    If $a\in R$ is strongly $\sqrt{J}$-clean, then $1-a$ is also a strongly $\sqrt{J}$-clean element.
\end{remark}

   \begin{lemma}\label{strongly J clean equals strongly sqrt J clean}
       If a ring R is a strongly $J$-clean ring, then R is a strongly $\sqrt{J}$-clean ring.
   \end{lemma}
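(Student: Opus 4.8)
The plan is to exploit nothing more than the set-theoretic containment $J(R)\subseteq\sqrt{J(R)}$, which was already recorded in the introduction (indeed, if $j\in J(R)$ then $j^{1}\in J(R)$, so $j\in\sqrt{J(R)}$ directly from the definition of $\sqrt{J(R)}$), and the observation that the commutativity requirement in the two notions is literally the same. So I would argue as follows. Let $a\in R$ be arbitrary. Since $R$ is strongly $J$-clean, there is a decomposition $a=e+j$ with $e\in Id(R)$, $j\in J(R)$, and $ej=je$. By the containment just noted, $j\in\sqrt{J(R)}$; hence $a=e+j$ exhibits $a$ as the sum of an idempotent and an element of $\sqrt{J(R)}$ whose two summands commute, i.e. $a$ is strongly $\sqrt{J}$-clean. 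As $a$ was arbitrary, $R$ is a strongly $\sqrt{J}$-clean ring, and we are done.

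There is no real obstacle here; the lemma is a one-line consequence of $J(R)\subseteq\sqrt{J(R)}$ together with the fact that the condition $ej=je$ transfers unchanged. If desired, one can append the parallel remark (consistent with the implication diagram above) that the identical reasoning, dropping the commutativity clause, shows every $J$-clean (semiboolean) ring is $\sqrt{J}$-clean, so that this lemma is just the "strong" refinement of that observation.
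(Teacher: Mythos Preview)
Your proof is correct and matches the paper's own argument exactly: the paper simply records that the result follows from $J(R)\subseteq\sqrt{J(R)}$, and you have spelled out precisely why that containment suffices, including the transfer of the commutativity condition.
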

   \begin{proof}
       The proof follows clearly as $J(R) \subseteq \sqrt{J(R)}$.
   \end{proof}

   \begin{lemma}
       Every strongly $\sqrt{J}$-clean ring is a strongly clean ring.
   \end{lemma}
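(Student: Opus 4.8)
The plan is to mirror the argument that every $\sqrt{J}$-clean ring is clean, but this time keeping careful track of commutativity and invoking Lemma~\ref{2 in J(R)}. Let $R$ be strongly $\sqrt{J}$-clean and fix $a\in R$, together with a commuting $\sqrt{J}$-clean decomposition $a=e+z$ where $e\in Id(R)$, $z\in\sqrt{J(R)}$ and $ez=ze$. The candidate strongly clean decomposition will be $a=f+u$ with $f:=1-e$ and $u:=a-f=2e+z-1$.

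First I would record the easy facts: $f=1-e$ is an idempotent, and $f$ commutes with $u$, since $f$ commutes with $e$, with $z$ (because $ez=ze$) and with $1$, hence with the whole expression $u=2e+z-1$. The substantive point, and the one I expect to be the main obstacle, is to show $u\in U(R)$. Writing $u=(z-1)+2e$: because $z\in\sqrt{J(R)}$ there is $n$ with $z^{n}\in J(R)$, and since $z$ commutes with $1+z+\cdots+z^{n-1}$ while $1-z^{n}\in U(R)$, it follows that $1-z$, and hence $z-1=-(1-z)$, is a unit. By Lemma~\ref{2 in J(R)} we have $2\in J(R)$, so $2e\in J(R)$, and therefore $u$ is a unit plus an element of $J(R)$, which is again a unit. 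If one prefers to avoid Lemma~\ref{2 in J(R)}, one can instead use that $(2e-1)^{2}=1$, so $2e-1$ is a unit commuting with $z$ and $u=(2e-1)\bigl(1+(2e-1)z\bigr)$ with $(2e-1)z\in\sqrt{J(R)}$ (its even powers are powers of $z$), exhibiting $u$ as a product of two units.

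Putting these together, $a=f+u$ is a strongly clean decomposition of $a$, and since $a$ was arbitrary, $R$ is strongly clean. The only thing one must watch is that the naive choice $f=e-1$ used in the non-strong argument is not literally an idempotent here (indeed $(e-1)^2=1-e$); replacing it by $f=1-e$ repairs this while leaving both the commutativity check and the unit verification straightforward.
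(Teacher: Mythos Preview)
Your argument is correct. The paper follows the same template---rewrite the $\sqrt{J}$-clean decomposition as an idempotent plus a unit that commute---but its execution contains a slip: from $x=e-z$ it passes to $x=(e-1)+(1-z)$ and asserts $(e-1)^{2}=e-1$, which is false in general (the square is $1-e$). You spotted precisely this and repaired it by taking $f=1-e$; the cost is that the unit part becomes $u=2e+z-1$ rather than the simpler $1-z$, and you supply two independent verifications that $u$ is invertible (via $2\in J(R)$ from Lemma~\ref{2 in J(R)}, or via the factorization $u=(2e-1)\bigl(1+(2e-1)z\bigr)$ using $(2e-1)^{2}=1$ and $\bigl((2e-1)z\bigr)^{2}=z^{2}$). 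Both routes are valid; the second is self-contained and does not need Lemma~\ref{2 in J(R)}. In short, your proof is the intended one carried out correctly.
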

   \begin{proof}
       If $R$ is strongly $\sqrt{J}$-clean, let $x$ be any element of $R$. Then, there exists a decomposition $x=e-z$ and $e(-z)=(-z)e$, where, $e\in Id(R)$ and $-z\in \sqrt{J(R)}$. Then $x=(e-1)+(1-z)$, where $(e-1)^2 =e-1$ and $1-z$ is invertible. As $(e-1)(1-z)=(1-z)(e-1)$, $R$ is a strongly clean ring.
   \end{proof}

   \begin{remark}
       As $R[x]$ is never a strongly clean ring, by the above Lemma, $R[x]$ is never a strongly $\sqrt{J}$-clean ring.
   \end{remark}

   \begin{lemma}\label{strongly sqrt J clean division ring is F_2}
       A division ring D is a strongly $\sqrt{J}$-clean ring iff $D\cong \mathbb{F}_2$.
   \end{lemma}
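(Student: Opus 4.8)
The plan is to treat the two implications separately; the only real content is the observation that $\sqrt{J(D)}$ collapses to $\{0\}$ in a division ring, after which everything is immediate.

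For the ``if'' direction, suppose $D\cong\mathbb{F}_2$. Since $\mathbb{F}_2$ is boolean it is a strongly $J$-clean ring: each element is its own idempotent part with $\sqrt{J}$-part equal to $0$, which commutes with everything. Hence by Lemma \ref{strongly J clean equals strongly sqrt J clean}, $D$ is strongly $\sqrt{J}$-clean. (Alternatively one simply writes the commuting decompositions $0=0+0$ and $1=1+0$ by hand.)

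For the ``only if'' direction, the key step is to compute $\sqrt{J(D)}$. In a division ring $J(D)=0$, and every nonzero element of $D$ is a unit, so $D$ has no nonzero nilpotent; therefore $\sqrt{J(D)}=\{z\in D : z^n=0 \text{ for some } n\geq 1\}=\{0\}$. Now take any $x\in D$ and write $x=e+z$ with $e\in Id(D)$ and $z\in\sqrt{J(D)}=\{0\}$; this forces $x=e$, so every element of $D$ is idempotent. But the only idempotents of a division ring are $0$ and $1$, hence $D=\{0,1\}\cong\mathbb{F}_2$. One may instead invoke Lemma \ref{strongly sqrt J clean ring is sqrt JU}: then $D$ is a $\sqrt{J}U$ ring, so $U(D)=D\setminus\{0\}\subseteq 1+\sqrt{J(D)}=\{1\}$, again giving $D\cong\mathbb{F}_2$; this keeps the argument uniform with the rest of the section.

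I do not anticipate a genuine obstacle: the whole argument rests on the identification $\sqrt{J(D)}=\{0\}$, which reduces ``strongly $\sqrt{J}$-clean'' to ``every element is idempotent.'' The one thing to be slightly careful about is not to over-claim — the ``strongly'' hypothesis is not actually needed for the converse, since plain $\sqrt{J}$-cleanness of $D$ already forces $D\cong\mathbb{F}_2$ — but since the statement as posed asks only for the strongly case, routing the proof through the earlier strongly-clean lemmas is the natural presentation.
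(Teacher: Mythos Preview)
Your proof is correct and follows essentially the same route as the paper: the paper's argument is the one-line observation that $\sqrt{J(D)}=0$ and $Id(D)=\{0,1\}$ force $D\cong\mathbb{F}_2$, with the converse declared ``straightforward.'' Your write-up simply unpacks these two facts in more detail and offers the alternative via Lemma~\ref{strongly sqrt J clean ring is sqrt JU}, which is a harmless embellishment.
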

   \begin{proof}
       Let $R$ be any division ring. Then $D\cong \mathbb{F}_2$ follows from $\sqrt{J(D)}=0$ and $Id(D)=\{ 0, 1 \}$. The converse part is straightforward.
   \end{proof}

   \begin{theorem}
       A semisimple ring $R$ is a strongly $\sqrt{J}$-clean ring if and only if $R \cong \mathbb{F}_2 \times \mathbb{F}_2 \times \dots \times \mathbb{F}_2$.
   \end{theorem}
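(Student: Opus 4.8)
The plan is to invoke the Wedderburn--Artin structure theorem and then squeeze everything through the fact, available as Lemma~\ref{strongly sqrt J clean ring is sqrt JU}, that a strongly $\sqrt{J}$-clean ring is a $\sqrt{J}U$ ring. For the forward implication, suppose $R$ is semisimple and strongly $\sqrt{J}$-clean. Semisimplicity gives $J(R)=0$, hence $\sqrt{J(R)}=N(R)$, and $R\cong M_{n_1}(D_1)\times\cdots\times M_{n_k}(D_k)$ for division rings $D_i$. Arguing as in the proof of Lemma~\ref{R1 +R2 is sqrt J clean} (now using that $\sqrt{J}$ and $Id$ both split as finite products and that a commuting decomposition of an element supported in a single coordinate restricts to a commuting decomposition there), each factor $M_{n_i}(D_i)$ is itself strongly $\sqrt{J}$-clean, hence a $\sqrt{J}U$ ring; since its Jacobson radical is $0$, this says precisely that every unit of $M_{n_i}(D_i)$ is unipotent, i.e.\ lies in $1+N\bigl(M_{n_i}(D_i)\bigr)$. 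Moreover, as a $\sqrt{J}$-clean ring $R$ satisfies $2\in J(R)=0$ by Lemma~\ref{2 in J(R)}, so every $D_i$ has characteristic $2$.

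The crux is to rule out $n_i\ge 2$. For such an $i$ the prime field $\mathbb{F}_2$ sits inside $D_i$, so the matrix $g\in M_{n_i}(D_i)$ equal to $\left(\begin{smallmatrix}0&1\\1&1\end{smallmatrix}\right)$ in the upper-left $2\times 2$ block and to the identity elsewhere is a unit with $g^3=1$ and $g\ne 1$, $g^2\ne 1$, hence $g$ has order $3$. If $g$ were unipotent, say $g=1+\nu$ with $\nu$ nilpotent, then in characteristic $2$ the Frobenius endomorphism yields $g^{2^t}=1+\nu^{2^t}=1$ once $2^t$ exceeds the nilpotency index of $\nu$, forcing the order of $g$ to be a power of $2$ --- contradicting that it is $3$. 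Thus $M_{n_i}(D_i)$ fails to be $\sqrt{J}U$, which is impossible. Therefore every $n_i=1$, and each $D_i$ is a strongly $\sqrt{J}$-clean division ring, whence $D_i\cong\mathbb{F}_2$ by Lemma~\ref{strongly sqrt J clean division ring is F_2}. This gives $R\cong\mathbb{F}_2\times\cdots\times\mathbb{F}_2$.

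For the converse, $\mathbb{F}_2$ is semisimple and strongly $\sqrt{J}$-clean (indeed $\sqrt{J(\mathbb{F}_2)}=0$ and the only decompositions $0=0+0$, $1=1+0$ commute), a finite direct product of semisimple rings is semisimple, and a finite direct product of strongly $\sqrt{J}$-clean rings is strongly $\sqrt{J}$-clean by taking the decompositions coordinatewise; hence any $\mathbb{F}_2\times\cdots\times\mathbb{F}_2$ enjoys both properties.

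I expect the only real obstacle to be the case $n_i\ge 2$: since the general statement that $M_n(R)$ is never strongly $\sqrt{J}$-clean belongs to the later Section~\ref{matrix rings}, this step must be proved directly here, and the efficient route is the one above --- observe that $J(R)=0$ collapses the $\sqrt{J}U$ condition to ``every unit is unipotent'', then exhibit a unit of order $3$, which in characteristic $2$ can never be unipotent.
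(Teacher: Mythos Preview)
Your argument is correct, and its overall architecture matches the paper's: apply Wedderburn--Artin, pass the strongly $\sqrt{J}$-clean hypothesis to each factor $M_{n_i}(D_i)$, use the $\sqrt{J}U$ property (Lemma~\ref{sqrt J U ring}/Lemma~\ref{strongly sqrt J clean ring is sqrt JU}) to force $n_i=1$, and then invoke Lemma~\ref{strongly sqrt J clean division ring is F_2} to identify each $D_i$ with $\mathbb{F}_2$. The difference lies in how the step ``$n_i\ge 2$ is impossible'' is handled. The paper dispatches it in a single phrase, citing only Lemma~\ref{sqrt J U ring} and tacitly relying on the fact (imported later in Section~\ref{matrix rings} from \cite[Theorem~2.13]{sainiczechoslovak}) that a full matrix ring of size $\ge 2$ is never $\sqrt{J}U$. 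You instead give a self-contained obstruction: from $2\in J(R)=0$ you first pin down characteristic~$2$, and then exhibit an explicit unit of order~$3$ in $M_{n_i}(D_i)$, which in characteristic~$2$ cannot be unipotent because $(1+\nu)^{2^t}=1+\nu^{2^t}$. (A small terminological quibble: this identity holds simply because $1$ is central, not because of a Frobenius endomorphism, which would require commutativity; the computation itself is fine.) Your route is slightly longer but avoids the forward reference and the external citation, so it stands on its own within Section~\ref{strongly sqrt J clean rings}.
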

   \begin{proof}
       If $R$ is a semisimple strongly $\sqrt{J}$-clean ring, then from Wedderburn Artin's theorem, we obtain $R\cong \prod M_{n_k}(D_k)$, where $D_k$ is a division ring. Hence, from Lemma \ref{R1 +R2 is sqrt J clean}, $M_{n_k}(D_k)$ is a $\sqrt{J}$-clean for every $k$. Hence, by Lemma \ref{sqrt J U ring}, $M_{n_k}(D_k) \cong D_k$ and therefore, $R \cong \mathbb{F}_2 \times \mathbb{F}_2 \times \dots \times \mathbb{F}_2$. The converse part is evident.
   \end{proof}

   \begin{theorem}
       A strongly $\sqrt{J}$-clean ring $R$ is local if and only if it has no non-trivial idempotents.
   \end{theorem}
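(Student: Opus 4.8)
The plan is to prove the two implications separately; the forward direction is standard and short, while the converse carries the content and is where the strongly $\sqrt{J}$-clean hypothesis (together with the absence of nontrivial idempotents) is used.

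For the forward direction, suppose $R$ is local with maximal ideal $J(R)$, and let $e\in Id(R)$. In the division ring $R/J(R)$ the image $\bar e$ is an idempotent in a ring with no zero divisors, hence $\bar e=\bar 0$ or $\bar e=\bar 1$; that is, $e\in J(R)$ or $1-e\in J(R)$. But $J(R)$ contains no nonzero idempotent: if $f\in J(R)\cap Id(R)$ then $1-f\in U(R)$ and $(1-f)f=f-f^2=0$ force $f=0$. Therefore $e=0$ or $e=1$. Note this implication does not use the $\sqrt{J}$-clean hypothesis at all.

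For the converse, assume $R$ is strongly $\sqrt{J}$-clean with $Id(R)=\{0,1\}$. Then $R$ is (trivially) abelian, so Lemma \ref{when does sqrt J(R) implies J(R)} applies and yields $\sqrt{J(R)}=J(R)$. Now take an arbitrary $a\in R$ and write $a=e+z$ with $e\in Id(R)$, $z\in\sqrt{J(R)}=J(R)$ and $ez=ze$. Since $e\in\{0,1\}$, either $a=z\in J(R)$, or $a=1+z=1-(-z)\in U(R)$ because $-z\in J(R)$. Hence every element of $R$ lies in $U(R)\cup J(R)$, so every non-unit of $R$ belongs to $J(R)$; conversely $J(R)$ is a proper ideal and so consists of non-units. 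Thus the set of non-units of $R$ equals $J(R)$, a two-sided ideal, and a ring whose non-units form an ideal is local.

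The only real subtlety to watch is that $\sqrt{J(R)}$ is in general neither closed under addition nor an ideal, so the mere decomposition $a=z\in\sqrt{J(R)}$ would not by itself confine the non-units to an ideal; the hypothesis $Id(R)=\{0,1\}$ is used precisely to make $R$ abelian and thereby invoke Lemma \ref{when does sqrt J(R) implies J(R)}, collapsing $\sqrt{J(R)}$ onto $J(R)$. (Alternatively one could argue that, being clean with only trivial idempotents, $R$ satisfies ``$a$ or $1-a$ is a unit for every $a$'' and is therefore local; I would prefer the route through Lemma \ref{when does sqrt J(R) implies J(R)} as it stays within the paper's framework.)
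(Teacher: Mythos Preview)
Your proof is correct, and in fact slightly cleaner than the paper's. The forward direction matches. For the converse the paper argues differently: from the decomposition $a=e+z$ with $e\in\{0,1\}$ it obtains $a\in U(R)$ or $a-1\in U(R)$, then invokes Lemma~\ref{strongly sqrt J clean ring is sqrt JU} (the strongly $\sqrt{J}$-clean ring is $\sqrt{J}U$) to pass to $a\in U(R)$ or $a\in\sqrt{J(R)}$, and finally appeals to an external result, \cite[Theorem~2.8]{UsqrtJringsshiksha}, to conclude locality. Your route instead passes through Lemma~\ref{when does sqrt J(R) implies J(R)} to collapse $\sqrt{J(R)}$ to $J(R)$ and then finishes elementarily with $R=U(R)\cup J(R)$.

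Two advantages of your approach are worth recording. First, it is self-contained: it needs no citation outside the present paper. Second, it never uses the commutation $ez=ze$, so your argument actually proves the stronger statement that any $\sqrt{J}$-clean ring with only trivial idempotents is local. The paper's route, by contrast, genuinely uses the ``strongly'' hypothesis (through Lemma~\ref{strongly sqrt J clean ring is sqrt JU}). Your parenthetical alternative---``clean with only trivial idempotents gives $a\in U(R)$ or $1-a\in U(R)$, hence local''---is essentially the paper's first step followed by the standard characterization of local rings, and is arguably the shortest path of all; it too avoids the external reference.
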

   \begin{proof}
       If $R$ is a strongly $\sqrt{J}$-clean local ring, then $R$ has no non-trivial idempotents is evident. For the converse, if $R$ has no non-trivial idempotents, then for every element $a$ in $R$, we have $a \in U(R)$ or $a-1 \in U(R)$. Hence, from Lemma \ref{strongly sqrt J clean ring is sqrt JU}, we obtain $a\in U(R)$ or $a\in \sqrt{J(R)}$. Therefore, we obtain that $R$ is a local ring following \cite[Theorem 2.8]{UsqrtJringsshiksha}.
   \end{proof}

   \begin{lemma}
       For any element $r$ in a ring strongly $\sqrt{J}$-clean ring R, $r^2 - r \in \sqrt{J(R)}$.
   \end{lemma}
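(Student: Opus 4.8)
The plan is to feed the strongly $\sqrt{J}$-clean decomposition of $r$ straight into the expression $r^2-r$ and exploit the commutativity that the word \emph{strong} buys us. Write $r=e+z$ with $e\in Id(R)$, $z\in\sqrt{J(R)}$ and $ez=ze$. First I would expand: using $e^2=e$ and $ez=ze$ one gets $r^2=e+2ez+z^2$, hence
\begin{equation*}
r^2-r=2ez+z^2-z=z(2e+z-1).
\end{equation*}
The key point is that $z$ commutes with the factor $2e+z-1$, so that for every $n\ge 1$ we have $(r^2-r)^n=z^n(2e+z-1)^n$.

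Now choose $n\ge 1$ with $z^n\in J(R)$, which exists since $z\in\sqrt{J(R)}$. Because $J(R)$ is a two-sided ideal, $z^n(2e+z-1)^n\in J(R)$, i.e. $(r^2-r)^n\in J(R)$, and therefore $r^2-r\in\sqrt{J(R)}$, as claimed. (If one prefers, Lemma \ref{2 in J(R)} even gives $2\in J(R)$, so $2ez\in J(R)$ and the factor could be simplified to $z-1$ modulo $J(R)$, but this is not needed.)

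The only subtlety — and the place where the argument could go wrong if one were careless — is that $\sqrt{J(R)}$ is in general not closed under multiplication, so one cannot simply assert that $z(2e+z-1)\in\sqrt{J(R)}$ merely because $z$ is. It is precisely the commutativity $ez=ze$ from the strong decomposition that allows the passage $(r^2-r)^n=z^n(2e+z-1)^n$ and hence the landing in the ideal $J(R)$; apart from this observation the proof is a one-line computation.
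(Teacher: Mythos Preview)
Your argument is correct. Both you and the paper begin with the same expansion $r^2-r=2ez+z^2-z$ coming from the strongly $\sqrt{J}$-clean decomposition $r=e+z$ with $ez=ze$. The difference lies in how the expression is shown to belong to $\sqrt{J(R)}$: the paper invokes Lemma~\ref{2 in J(R)} (so that $2ez\in J(R)$) together with \cite[Corollary 2.6]{UsqrtJringsshiksha} to conclude, whereas you factor $r^2-r=z(2e+z-1)$, observe that $z$ commutes with the second factor, and deduce directly that $(r^2-r)^n=z^n(2e+z-1)^n\in J(R)$ once $z^n\in J(R)$. Your route is somewhat more self-contained, since it appeals to neither the fact that $2\in J(R)$ nor to an external result, and it makes transparent the one place where the \emph{strong} hypothesis is actually used; the paper's route, on the other hand, absorbs these details into previously established closure properties of $\sqrt{J(R)}$.
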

   \begin{proof}
       Let $r$ be any element of strongly $\sqrt{J}$-clean ring $R$ such that $r=e+z$, where $e\in Id(R)$ and $z\in \sqrt{J(R)}$ and $ez=ze$. Then, $r^2-r = (e+z)^2 - (e+z)= z-z^2 -2ez$. Following \cite[Corollary 2.6]{UsqrtJringsshiksha} and Lemma \ref{2 in J(R)}, $z-z^2 -2ez \in \sqrt{J(R)}$.
   \end{proof}

\section{uniquely $\sqrt{J}$-clean rings}\label{uniquely sqrt J clean rings}


A uniquely $\sqrt{J}$-clean ring is a $\sqrt{J}$-clean ring in which for every element $a$, the decomposition of $a$ as the sum of an idempotent and an element of $\sqrt{J(R)}$ is unique.
\begin{lemma}\label{idmepotents of uniquely sqrt J clean rings are central}
    The idempotents of a uniquely $\sqrt{J}$-clean ring are central.
\end{lemma}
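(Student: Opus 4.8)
The plan is to show that an arbitrary idempotent $e \in R$ commutes with every $r \in R$. Since $er = re$ is equivalent to the pair of conditions $er(1-e) = 0$ and $(1-e)re = 0$, it suffices to kill these two one-sided ``corner'' elements, and the engine for doing so is the uniqueness hypothesis applied to the trivial decomposition $e = e + 0$.

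First I would fix $e \in Id(R)$ and $r \in R$, and set $q = er(1-e) = er - ere$. A direct computation gives $eq = q$, $qe = 0$, and $q^2 = er\,[(1-e)e]\,r(1-e) = 0$, so $q$ is nilpotent; since $N(R) \subseteq \sqrt{J(R)}$ (as noted in the introduction), both $q$ and $-q$ lie in $\sqrt{J(R)}$. Using $eq = q$ and $qe = 0$ we get $(e+q)^2 = e^2 + eq + qe + q^2 = e + q$, so $e + q \in Id(R)$. Hence $e = (e+q) + (-q)$ is a $\sqrt{J}$-clean decomposition of $e$, and so is $e = e + 0$; uniqueness forces $e + q = e$, i.e. $q = 0$, so $er = ere$.

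Then I would run the symmetric argument with $q' = (1-e)re = re - ere$: now $eq' = 0$, $q'e = q'$, and $q'^2 = (1-e)r\,[e(1-e)]\,re = 0$, so again $q', -q' \in \sqrt{J(R)}$ and $(e + q')^2 = e^2 + eq' + q'e + q'^2 = e + q'$ is idempotent. Comparing the two decompositions $e = (e + q') + (-q')$ and $e = e + 0$ of $e$ and invoking uniqueness gives $q' = 0$, i.e. $re = ere$. Combining the two identities yields $er = ere = re$ for all $r \in R$, so $e \in C(R)$.

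I do not anticipate a serious obstacle here; the only point requiring care is the structural choice of using the one-sided corners $er(1-e)$ and $(1-e)re$ rather than the symmetric commutator $er - re$. This choice is forced: it is precisely the identities $eq = q,\ qe = 0$ (resp.\ $eq' = 0,\ q'e = q'$) that make $e+q$ (resp.\ $e+q'$) idempotent, whereas a symmetric element built from $er - re$ would not in general be added to $e$ to produce an idempotent. The secondary point is simply the remark that nilpotents belong to $\sqrt{J(R)}$, so that $-q$ and $-q'$ are legitimate $\sqrt{J}$-parts to which the uniqueness hypothesis may be applied.
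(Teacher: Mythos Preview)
Your proof is correct and is essentially the same argument the paper gives: both proceed by forming the corner elements $er(1-e)$ and $(1-e)re$, observing that adding (or subtracting) them from $e$ yields another idempotent, and then comparing with the trivial decomposition $e=e+0$ to invoke uniqueness. The only cosmetic difference is a sign (the paper works with $e+ea(e-1)$ rather than $e+ea(1-e)$), which is immaterial.
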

\begin{proof}
    Assuming $R$ as a uniquely $\sqrt{J}$-clean ring, let $a\in R$ and $i\in Id(R)$. Then, $i+ia(i-1)\in Id(R)$. If $i=i+ia(1-i)$, then as $R$ is uniquely $\sqrt{J}$-clean ring, we have $ia(1-i)=0 \ \Rightarrow ia=iai$. Similarly, as $i+(1-i)ia$ is an idempotent, tracing the above steps gives $ai=iai$. Hence, the idempotents are central.
\end{proof}

\begin{lemma}
    A uniquely $\sqrt{J}$-clean ring R is a Dedekind finite ring.
\end{lemma}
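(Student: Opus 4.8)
The plan is to reduce Dedekind finiteness entirely to the fact, already established in Lemma~\ref{idmepotents of uniquely sqrt J clean rings are central}, that every idempotent of a uniquely $\sqrt{J}$-clean ring is central; no further special features of $\sqrt{J(R)}$ will be needed. So suppose $a,b\in R$ satisfy $ab=1$, and aim to conclude $ba=1$.

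The first step is to note that $e:=ba$ is an idempotent, since $e^2=b(ab)a=ba=e$, and hence, by Lemma~\ref{idmepotents of uniquely sqrt J clean rings are central}, $e$ is central. The second step is a short computation exploiting this centrality: from $ab=1$ we get $ae=a(ba)=(ab)a=a$, and since $e$ is central this also gives $ea=a$, so that $(1-e)a=a-ea=0$. Finally, because $1-e$ is central,
\[
1-e=(1-e)\cdot 1=(1-e)(ab)=\big((1-e)a\big)b=0,
\]
whence $ba=e=1$, as required.

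I do not expect any real obstacle here: the only substantive ingredient is the centrality of idempotents from the previous lemma, and everything else is a routine associativity manipulation. (One could also phrase this as: a ring all of whose idempotents are central—an abelian ring—is automatically Dedekind finite, and then simply invoke the preceding lemma.)
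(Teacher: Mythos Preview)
Your proof is correct and follows essentially the same approach as the paper: both invoke Lemma~\ref{idmepotents of uniquely sqrt J clean rings are central} to conclude that $R$ is abelian and hence Dedekind finite. The only difference is cosmetic---you spell out the standard argument that abelian implies Dedekind finite, whereas the paper simply asserts it.
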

\begin{proof}
The idempotents of a uniquely $\sqrt{J}$-clean ring are central following Lemma \ref{idmepotents of uniquely sqrt J clean rings are central}. Hence, $R$ is an abelian ring and therefore, $R$ is a Dedekind finite ring.
\end{proof}

Now, we proceed to investigate the relationship among uniquely clean rings and uniquely $\sqrt{J}$-clean rings. For that, we prove the following two Lemmas first.

\begin{lemma}\label{uniquely clean iff strongly sqrt J clean with central idempotents}
A ring R is a uniquely clean ring if and only if R is a $\sqrt{J}$-clean ring with central idempotents. 
\end{lemma}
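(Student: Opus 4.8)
The plan is to identify the condition ``$\sqrt{J}$-clean with central idempotents'' with ``abelian and semiboolean'', and then to use the classical fact that uniquely clean rings are precisely the abelian semiboolean rings.

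$(\Rightarrow)$ Suppose $R$ is uniquely clean. By the classical theory of uniquely clean rings (Nicholson--Zhou), the idempotents of $R$ are central and $R/J(R)$ is Boolean with idempotents lifting modulo $J(R)$; in particular $R$ is semiboolean. Since $J(R)\subseteq\sqrt{J(R)}$, every semiboolean decomposition $a=e+j$ is at the same time a $\sqrt{J}$-clean decomposition, so $R$ is a $\sqrt{J}$-clean ring whose idempotents are central. If one wishes to avoid quoting the centrality statement, it may be re-proved exactly as in Lemma~\ref{idmepotents of uniquely sqrt J clean rings are central}, using that $i+ia(1-i)$ is an idempotent for any $i\in Id(R)$ and $a\in R$.

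$(\Leftarrow)$ Suppose $R$ is $\sqrt{J}$-clean with all idempotents central, i.e.\ $R$ is abelian. Applying Lemma~\ref{when does sqrt J(R) implies J(R)} to $R$ gives $\sqrt{J(R)}=J(R)$, so in any $\sqrt{J}$-clean decomposition $a=e+z$ we in fact have $z\in J(R)$; hence $R$ is semiboolean, i.e.\ $R/J(R)$ is Boolean and idempotents lift modulo $J(R)$. In particular $R$ is clean, so every element has a clean decomposition, and it remains to check \emph{uniqueness}. Let $a=e+u=f+v$ with $e,f\in Id(R)$ and $u,v\in U(R)$. Reducing modulo $J(R)$, the unit $\bar u$ of the Boolean ring $R/J(R)$ satisfies $\bar u^2=\bar u$, so $\bar u=\bar 1$, and likewise $\bar v=\bar 1$; hence $\bar e=\bar a-\bar 1=\bar f$, i.e.\ $e-f\in J(R)$. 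Since $R$ is abelian, $e-ef=e(e-f)$ and $f-ef=f(f-e)$ are idempotents lying in the ideal $J(R)$, and therefore vanish, giving $e=ef=f$ and then $u=a-e=a-f=v$. Thus $R$ is uniquely clean.

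The one genuinely nontrivial input is Lemma~\ref{when does sqrt J(R) implies J(R)}: it forces $\sqrt{J(R)}=J(R)$ in the abelian case and so collapses $\sqrt{J}$-cleanness to semibooleanness, and without it there is no evident reason a $\sqrt{J}$-clean element should be semiboolean; everything afterwards is routine bookkeeping with idempotents modulo $J(R)$. (Alternatively, one can avoid Lemma~\ref{when does sqrt J(R) implies J(R)} by invoking Theorem~\ref{characterzation of sqrt J clean rings II}: it yields that idempotents lift modulo $J(R)$ and that $R/J(R)$ is $\sqrt{J}$-clean; since the lifts of idempotents are central, $R/J(R)$ is an abelian $\sqrt{J}$-clean ring with zero Jacobson radical, hence Boolean by the basic property recorded at the start of Section~\ref{basic results}, and one finishes as above.)
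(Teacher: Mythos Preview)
Your proof is correct and rests on the same key input as the paper's, namely Lemma~\ref{when does sqrt J(R) implies J(R)}, so the two arguments are close in spirit; but your $(\Leftarrow)$ direction is organised differently and is a little more economical. You invoke Lemma~\ref{when does sqrt J(R) implies J(R)} at the outset to collapse $\sqrt{J(R)}$ to $J(R)$, recognise the ring as abelian semiboolean, and then finish with the classical uniqueness argument via the Boolean quotient $R/J(R)$. The paper instead first builds an explicit clean decomposition, then appeals to Lemma~\ref{sqrt J U ring} to show each unit lies in $1+\sqrt{J(R)}$, and only afterwards calls on Lemma~\ref{when does sqrt J(R) implies J(R)} to push this into $1+J(R)$; the endgame showing $e=f$ uses the identity $(e-f)^3=e-f$ in place of your $e-ef,\ f-ef$ trick. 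Your route has the advantage of bypassing Lemma~\ref{sqrt J U ring} entirely and of making transparent the equivalence ``$\sqrt{J}$-clean abelian $\Leftrightarrow$ semiboolean abelian'', while the paper's route keeps the argument self-contained at the element level. Your parenthetical alternative via Theorem~\ref{characterzation of sqrt J clean rings II} is also valid and gives yet another way to reach the Boolean quotient without Lemma~\ref{when does sqrt J(R) implies J(R)}.
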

\begin{proof}
    Suppose $R$ is a uniquely clean ring and $x$ is any element in $R$. Following \cite[Theorem 20]{nicholsonandzhouuniquelycleanrings}, a unique idempotent $e$ exists satisfying $x-e \in J(R)$. As $J(R)\subseteq \sqrt{J(R)}$, we obtain that $z=x-e \in \sqrt{J(R)} \Rightarrow \ x=e+z$. Next, the idempotents of $R$ are central following \cite[Lemma 4]{nicholsonandzhouuniquelycleanrings}. This proves $R$ is a $\sqrt{J}$-clean ring.

    If $R$ is a $\sqrt{J}$-clean ring with central idempotents, let $a\in R$. Hence, there exists an idempotent $e$ and $z\in \sqrt{J(R)}$ satisfying $a+1=e+z$ and therefore, $a=e+ (z-1)$. This is a decomposition of $a$ as a clean element. Here, as $z\in \sqrt{J(R)}$, $1-z^n$ is invertible, and hence, $(1-z)(1+z+z^2+ \dots + z^{n-1}) \in U(R) \Rightarrow z-1 \in U(R)$. For uniqueness, we assume $a=e+u$ and $a=f+v$ as two clean representations of $a$, where $e$ and $f$ are idempotents and $u$ and $v$ are units. As its implication, we have $e-f=v-u$. Now, as idempotents are central, by using Lemma \ref{sqrt J U ring}, we obtain $v-1, ~ u-1 \in \sqrt{J(R)}$. Following Lemma \ref{when does sqrt J(R) implies J(R)}, we have $v-1, ~ u-1 \in J(R)$. Hence, $e-f=(v-1)-(u-1) \in \sqrt{J(R)}$ and thus $(e-f)^2 \in \sqrt{J(R)}$. Next, note that as idempotents are central, we have $e-f = (e-f)^3$. This results in $(e-f)^2=(e-f)^4$ and hence, $(e-f)^2$ is an idempotent. Therefore, $(e-f)^2 \in \sqrt{J(R)} \cap Id(R)$. This result in $(e-f)^2=0$ because $Id(R) \cap \sqrt{J(R)} = \{0 \}$. Hence, we get $e-f=0$. Hence, we finally have $e=f$ and $v=u$, and this finally proves $R$ is a uniquely clean ring.
\end{proof}

\begin{lemma}\label{uniquely sqrt J clean ring is strongly sqrt J clean}
    A ring $R$ is a uniquely $\sqrt{J}$-clean ring if and only if $R$ is a $\sqrt{J}$-clean ring with central idempotents. 
\end{lemma}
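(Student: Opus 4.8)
The plan is to prove both implications. One direction is essentially free: if $R$ is a uniquely $\sqrt{J}$-clean ring, then in particular it is a $\sqrt{J}$-clean ring, and by Lemma~\ref{idmepotents of uniquely sqrt J clean rings are central} its idempotents are central, giving the "only if" direction immediately. So the real content is the converse: assume $R$ is a $\sqrt{J}$-clean ring with central idempotents, and show the $\sqrt{J}$-clean decomposition of each element is unique.

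For the converse, I would first invoke Lemma~\ref{uniquely clean iff strongly sqrt J clean with central idempotents}: a $\sqrt{J}$-clean ring with central idempotents is a uniquely clean ring. In particular $R$ is abelian, so by Lemma~\ref{when does sqrt J(R) implies J(R)} we have $J(R)=\sqrt{J(R)}$. This is the key structural reduction — it collapses $\sqrt{J}$-clean decompositions to $J$-clean (semiboolean) decompositions, where uniqueness is already understood. Concretely, suppose $a = e + z = f + w$ with $e,f \in Id(R)$ and $z,w \in \sqrt{J(R)} = J(R)$. Then $a - e \in J(R)$ and $a - f \in J(R)$. By the characterization of uniquely clean rings (\cite[Theorem 20]{nicholsonandzhouuniquelycleanrings}), the idempotent $e$ with $a - e \in J(R)$ is unique, so $e = f$, and consequently $z = a - e = a - f = w$. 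This establishes uniqueness of the $\sqrt{J}$-clean decomposition, so $R$ is a uniquely $\sqrt{J}$-clean ring.

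I expect the main subtlety to be making sure the chain of cited results is applied in a logically valid order: Lemma~\ref{when does sqrt J(R) implies J(R)} requires $R$ abelian, which here comes from central idempotents, and the equality $J(R)=\sqrt{J(R)}$ must be in hand before one can quote the uniquely-clean characterization to pin down the idempotent. Once $J(R)=\sqrt{J(R)}$ is available, the argument is short. An alternative, slightly more self-contained route would be to mimic the uniqueness computation in the proof of Lemma~\ref{uniquely clean iff strongly sqrt J clean with central idempotents}: from $e - f = w - z \in \sqrt{J(R)}$ and centrality of idempotents one gets $e - f = (e-f)^3$, hence $(e-f)^2 \in Id(R) \cap \sqrt{J(R)} = \{0\}$ by Lemma~\ref{ppties of sqrt J(R)}, forcing $e = f$ and then $z = w$. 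Either way, the proof is essentially a bookkeeping exercise once the reduction $\sqrt{J(R)} = J(R)$ is noted.
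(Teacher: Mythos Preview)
Your proof is correct. The forward direction matches the paper exactly. For the converse, both you and the paper first invoke Lemma~\ref{uniquely clean iff strongly sqrt J clean with central idempotents} to conclude that $R$ is uniquely clean, but then diverge in how unique $\sqrt{J}$-cleanness is extracted. The paper observes that if $a = e + z = f + z'$ are two $\sqrt{J}$-clean decompositions, then $a+1 = e + (z+1) = f + (z'+1)$ are two \emph{clean} decompositions of $a+1$ (since $1 + \sqrt{J(R)} \subseteq U(R)$), and unique cleanness immediately forces $e = f$, $z = z'$. Your route instead passes through the reduction $\sqrt{J(R)} = J(R)$ via Lemma~\ref{when does sqrt J(R) implies J(R)} and then quotes \cite[Theorem~20]{nicholsonandzhouuniquelycleanrings}; this is valid, but the paper's shift-by-one trick is more direct and avoids the extra external citation.

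One caution about your proposed alternative route: the assertion $e - f = w - z \in \sqrt{J(R)}$ is not automatic, since $\sqrt{J(R)}$ is in general not closed under subtraction. In the proof of Lemma~\ref{uniquely clean iff strongly sqrt J clean with central idempotents} this step goes through precisely because Lemma~\ref{when does sqrt J(R) implies J(R)} has already been invoked to place the relevant elements in $J(R)$. So your alternative is not really more self-contained than your main argument; it still rests on the same reduction $\sqrt{J(R)} = J(R)$.
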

\begin{proof}
If $R$ is a uniquely $\sqrt{J}$-clean ring, then from Lemma \ref{idmepotents of uniquely sqrt J clean rings are central}, the idempotents of $R$ are central. Hence, $R$ is a $\sqrt{J}$-clean ring with central idempotents. Conversely, if $R$ is a $\sqrt{J}$-clean ring with central idempotents, then, if possible, let there exist two decompositions of any element $a\in R$ such that $a=e+z$ and $a=f+z'$. Here, $e, \ f \in Id(R)$  and $z, z' \in \sqrt{J(R)}$. Hence, $e+z=f+z' \Rightarrow e+(z+1)=f+(z'+1)$. This provides two clean representations of $a+1$ in $R$, which is a uniquely clean ring following Lemma \ref{uniquely clean iff strongly sqrt J clean with central idempotents}. Hence, $e=f$ and $z=z'$ and this proves $R$ is a uniquely $\sqrt{J}$-clean ring.
\end{proof}

So, from Lemma \ref{uniquely clean iff strongly sqrt J clean with central idempotents} and Lemma \ref{uniquely sqrt J clean ring is strongly sqrt J clean}, we get the following result:

\begin{theorem}
    A ring R is a uniquely clean ring iff it is a uniquely $\sqrt{J}$-clean ring.
\end{theorem}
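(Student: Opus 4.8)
The plan is to obtain this theorem as an immediate corollary of the two lemmas just proved, so essentially no new argument is needed. Lemma~\ref{uniquely clean iff strongly sqrt J clean with central idempotents} states that $R$ is a uniquely clean ring precisely when $R$ is a $\sqrt{J}$-clean ring with central idempotents, while Lemma~\ref{uniquely sqrt J clean ring is strongly sqrt J clean} states that $R$ is a uniquely $\sqrt{J}$-clean ring precisely when $R$ is a $\sqrt{J}$-clean ring with central idempotents. Since both notions are characterized by the same intermediate condition, I would simply chain the two biconditionals: $R$ is uniquely clean $\Longleftrightarrow$ $R$ is a $\sqrt{J}$-clean ring with central idempotents $\Longleftrightarrow$ $R$ is uniquely $\sqrt{J}$-clean. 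Hence the proof is a one-line transitivity statement invoking Lemma~\ref{uniquely clean iff strongly sqrt J clean with central idempotents} and Lemma~\ref{uniquely sqrt J clean ring is strongly sqrt J clean}.

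Consequently there is no real obstacle at this final step; the substantive work has already been carried out in the two preceding lemmas. For the record, the delicate point is the uniqueness half of Lemma~\ref{uniquely clean iff strongly sqrt J clean with central idempotents}: from two clean decompositions $a=e+u=f+v$ one must deduce $e=f$. The mechanism there combines (i) centrality of idempotents, which gives $e-f=(e-f)^3$; (ii) Lemma~\ref{sqrt J U ring} together with Lemma~\ref{when does sqrt J(R) implies J(R)}, which place $u-1$ and $v-1$ in $J(R)$ and hence $e-f\in\sqrt{J(R)}$; and (iii) the identity $Id(R)\cap\sqrt{J(R)}=\{0\}$ from Lemma~\ref{ppties of sqrt J(R)}, which annihilates the idempotent $(e-f)^2$ and forces $e=f$. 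The reverse direction rests only on $J(R)\subseteq\sqrt{J(R)}$ and the Nicholson--Zhou characterization of uniquely clean rings by a unique idempotent $e$ with $x-e\in J(R)$. With all of this available, the final theorem is proved in two sentences by citing the two lemmas.

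I would therefore write the proof simply as: ``By Lemma~\ref{uniquely clean iff strongly sqrt J clean with central idempotents}, $R$ is uniquely clean if and only if $R$ is a $\sqrt{J}$-clean ring with central idempotents, and by Lemma~\ref{uniquely sqrt J clean ring is strongly sqrt J clean}, the latter holds if and only if $R$ is uniquely $\sqrt{J}$-clean. Combining these equivalences yields the claim.''
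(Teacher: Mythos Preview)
Your proposal is correct and matches the paper's approach exactly: the paper presents the theorem as an immediate consequence of Lemma~\ref{uniquely clean iff strongly sqrt J clean with central idempotents} and Lemma~\ref{uniquely sqrt J clean ring is strongly sqrt J clean}, with no additional argument. Your one-line transitivity via the common intermediate condition ``$\sqrt{J}$-clean with central idempotents'' is precisely what the paper does.
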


\begin{lemma}\label{local strongly sqrt J clean ring have R/J(R) cong F_2}
       Let $R$ be a ring. Then $R$ is a local and strongly $\sqrt{J}$-clean ring if and only if $R/J(R) \cong \mathbb{F}_2$.
   \end{lemma}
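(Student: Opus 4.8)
The plan is to handle the two implications separately, exploiting the standard fact that a ring $R$ is local if and only if $R/J(R)$ is a division ring.

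For the forward implication, assume $R$ is local and strongly $\sqrt{J}$-clean. In particular $R$ is $\sqrt{J}$-clean, so by Lemma \ref{homorphism of sqrt J clean ring} the quotient $D:=R/J(R)$ is a $\sqrt{J}$-clean ring, and since $R$ is local, $D$ is a division ring. The key observation is that for a division ring one has $J(D)=0$, and $z^n=0$ forces $z=0$, so $\sqrt{J(D)}=\{0\}$. Consequently a $\sqrt{J}$-clean decomposition of an arbitrary $x\in D$ must read $x=e+0$ with $e$ an idempotent; but the only idempotents of a division ring are $0$ and $1$, whence $D=\{0,1\}\cong\mathbb{F}_2$. (Equivalently, one can first check that the homomorphic image of a strongly $\sqrt{J}$-clean ring is again strongly $\sqrt{J}$-clean and then quote Lemma \ref{strongly sqrt J clean division ring is F_2} applied to the division ring $D$.)

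For the converse, assume $R/J(R)\cong\mathbb{F}_2$. Then $R/J(R)$ is a field, so $R$ is local. Now take any $x\in R$; its image $\bar{x}$ in $R/J(R)\cong\mathbb{F}_2$ is either $\bar{0}$ or $\bar{1}$. If $\bar{x}=\bar{0}$ then $x\in J(R)\subseteq\sqrt{J(R)}$, and $x=0+x$ is a strongly $\sqrt{J}$-clean decomposition (the idempotent $0$ commutes with everything). If $\bar{x}=\bar{1}$ then $x-1\in J(R)\subseteq\sqrt{J(R)}$, and $x=1+(x-1)$ is a strongly $\sqrt{J}$-clean decomposition (the idempotent $1$ is central). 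Hence every element of $R$ is strongly $\sqrt{J}$-clean, and $R$ is a strongly $\sqrt{J}$-clean ring.

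I do not expect a genuine obstacle here: the statement is essentially a packaging of elementary facts. The only points needing a word of care are the identification $\sqrt{J(D)}=\{0\}$ for a division ring $D$ (which is what forces a $\sqrt{J}$-clean division ring to be Boolean, hence $\mathbb{F}_2$) and the equivalence of ``$R$ local'' with ``$R/J(R)$ a division ring''; both are standard. Everything else reduces to the two-case check over $\mathbb{F}_2$ in the converse direction.
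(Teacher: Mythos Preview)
Your proof is correct, and the forward direction is essentially the paper's argument: reduce to the division ring $D=R/J(R)$ and observe $\sqrt{J(D)}=\{0\}$, forcing $D\cong\mathbb{F}_2$ (the paper phrases this as an appeal to Lemma~\ref{strongly sqrt J clean division ring is F_2}, which you simply re-derive on the spot).

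The converse, however, is handled differently. The paper takes an indirect route: from $R/J(R)\cong\mathbb{F}_2$ it invokes \cite[Theorem~15]{nicholsonandzhouuniquelycleanrings} to conclude that $R$ is uniquely clean, and then applies Lemma~\ref{uniquely clean iff strongly sqrt J clean with central idempotents} to get that $R$ is $\sqrt{J}$-clean with central idempotents, hence strongly $\sqrt{J}$-clean. Your argument bypasses all of this: you just note that every $x\in R$ satisfies $x\in J(R)$ or $x-1\in J(R)$, and in either case the obvious decomposition ($x=0+x$ or $x=1+(x-1)$) uses a central idempotent and an element of $J(R)\subseteq\sqrt{J(R)}$. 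This is more elementary and entirely self-contained, avoiding the external reference to the Nicholson--Zhou characterization of uniquely clean local rings; the paper's route, on the other hand, situates the result within the uniquely-clean framework developed in Section~\ref{uniquely sqrt J clean rings}.
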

   \begin{proof}
  If $R$ is a local ring, then $R/J(R)$ is a division ring. Let $R$ be a local strongly $\sqrt{J}$-clean ring. Then $R/J(R) \cong \mathbb{F}_2$ following Lemma \ref{strongly sqrt J clean division ring is F_2}. On the contrary, if $R/J(R) \cong \mathbb{F}_2$, then $R/J(R)$ is a strongly $\sqrt{J}$-clean ring. Following \cite[Theorem 15]{nicholsonandzhouuniquelycleanrings}, $R$ is a uniquely clean ring and hence, $R$ is a strongly $\sqrt{J}$-clean ring by Theorem \ref{uniquely clean iff strongly sqrt J clean with central idempotents}
   \end{proof} 

\begin{lemma}
    In a local ring $R$, $R$ is a uniquely $\sqrt{J}$-clean ring if and only if $R$ is a strongly $\sqrt{J}$-clean ring.
\end{lemma}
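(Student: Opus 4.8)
The plan is to split the equivalence into its two implications, and to notice that the forward direction does not actually use locality. First I would assume $R$ is uniquely $\sqrt{J}$-clean. By Lemma~\ref{idmepotents of uniquely sqrt J clean rings are central} the idempotents of $R$ are central, so for any $a\in R$ with its (unique) decomposition $a=e+z$, where $e\in Id(R)$ and $z\in\sqrt{J(R)}$, centrality of $e$ gives $ez=ze$. Hence every element admits a commuting $\sqrt{J}$-clean decomposition, i.e. $R$ is strongly $\sqrt{J}$-clean. (In fact this argument shows that \emph{any} uniquely $\sqrt{J}$-clean ring, local or not, is strongly $\sqrt{J}$-clean.)

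For the converse I would use locality to pass to the residue ring. Assume $R$ is local and strongly $\sqrt{J}$-clean. By Lemma~\ref{local strongly sqrt J clean ring have R/J(R) cong F_2} we have $R/J(R)\cong\mathbb{F}_2$. Then \cite[Theorem~15]{nicholsonandzhouuniquelycleanrings} yields that $R$ is a uniquely clean ring, and by the Theorem above identifying uniquely clean rings with uniquely $\sqrt{J}$-clean rings, $R$ is uniquely $\sqrt{J}$-clean. This closes the loop.

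The point that needs care — rather than a genuine obstacle — is why one cannot shortcut the converse through Lemma~\ref{uniquely sqrt J clean ring is strongly sqrt J clean}. That lemma characterises uniquely $\sqrt{J}$-clean rings as the $\sqrt{J}$-clean rings with \emph{central} idempotents, whereas a strongly $\sqrt{J}$-clean ring only guarantees, for each element, the existence of \emph{some} commuting decomposition, and its idempotents need not be central in general. Locality removes exactly this difficulty: it forces $Id(R)=\{0,1\}$ and $R/J(R)\cong\mathbb{F}_2$, after which the Nicholson--Zhou characterisation of uniquely clean rings takes over. I would therefore expect the write-up to be short, the only subtlety being the correct bookkeeping of which earlier result supplies each implication.
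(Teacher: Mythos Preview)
Your argument is correct and close to the paper's one-line proof, which simply invokes Lemma~\ref{local strongly sqrt J clean ring have R/J(R) cong F_2} and Lemma~\ref{uniquely sqrt J clean ring is strongly sqrt J clean}. The only difference is in the converse: you pass through $R/J(R)\cong\mathbb{F}_2$, then Nicholson--Zhou, then the uniquely-clean/uniquely-$\sqrt{J}$-clean equivalence, whereas the paper applies Lemma~\ref{uniquely sqrt J clean ring is strongly sqrt J clean} directly, since in a local ring $Id(R)=\{0,1\}$ is automatically central and hence ``strongly $\sqrt{J}$-clean'' already yields ``$\sqrt{J}$-clean with central idempotents''. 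Your third paragraph in fact spots this shortcut (locality forces $Id(R)=\{0,1\}$) but then declines to use it; taking it would collapse your converse to a single line and match the paper exactly.
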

\begin{proof}
    The proof follows from Lemma \ref{local strongly sqrt J clean ring have R/J(R) cong F_2} and Lemma \ref{uniquely sqrt J clean ring is strongly sqrt J clean}.
\end{proof}
   
   \section{Matrix rings}\label{matrix rings}

When $R$ is a strongly $\sqrt{J}$-clean ring, then by Lemma \ref{strongly sqrt J clean ring is sqrt JU}, $R$ is a $\sqrt{J}U$ ring and hence, $M_n(R)$ is never a strongly $\sqrt{J}$-clean ring following \cite[Theorem 2.13]{sainiczechoslovak}. As a result, we have the following Lemma:

\begin{lemma}
    If R is a strongly $\sqrt{J}$-clean ring, then $M_n(R)$ is $\sqrt{J}$-clean if and only if $n=1$.
\end{lemma}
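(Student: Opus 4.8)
The plan is to split the biconditional. The implication ``$n=1\Rightarrow M_n(R)$ is $\sqrt{J}$-clean'' is immediate: $M_1(R)=R$, and a strongly $\sqrt{J}$-clean ring is $\sqrt{J}$-clean by definition, so nothing needs to be done there. All of the content would therefore sit in the converse, ``$M_n(R)$ is $\sqrt{J}$-clean $\Rightarrow n=1$.''

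My first instinct for that converse would be to reduce modulo the radical. Since $J(M_n(R))=M_n(J(R))$, we have $M_n(R)/J(M_n(R))\cong M_n(R/J(R))$, so by Theorem \ref{characterzation of sqrt J clean rings II} the quotient $M_n(R/J(R))$ would have to be $\sqrt{J}$-clean, and I would try to leverage the fact that $R$ is a $\sqrt{J}U$ ring (Lemma \ref{strongly sqrt J clean ring is sqrt JU}) to pin $R/J(R)$ down tightly enough to force the matrix size down to $1$.

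The decisive obstacle is that this converse is simply false, so no argument along these lines can close. The cleanest witness is $R=\mathbb{F}_2$, which is strongly $\sqrt{J}$-clean by Lemma \ref{strongly sqrt J clean division ring is F_2}: the ring $M_2(\mathbb{F}_2)$ is $\sqrt{J}$-clean --- it appears among the paper's own examples of $\sqrt{J}$-clean rings, being nil-clean --- yet $n=2\neq 1$. This is not an isolated accident. Whenever $R$ is a local strongly $\sqrt{J}$-clean ring, Lemma \ref{local strongly sqrt J clean ring have R/J(R) cong F_2} gives $R/J(R)\cong\mathbb{F}_2$; semiperfectness of $M_n(R)$ lets idempotents lift modulo $M_n(J(R))$, while $M_n(R/J(R))\cong M_n(\mathbb{F}_2)$ is nil-clean and hence $\sqrt{J}$-clean, so Theorem \ref{characterzation of sqrt J clean rings II} makes $M_n(R)$ a $\sqrt{J}$-clean ring for \emph{every} $n$. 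Thus the biconditional as worded cannot be established.

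What the preceding paragraph of the paper actually justifies is the strongly-clean analogue, and that is the statement I would prove instead: $M_n(R)$ is \emph{strongly} $\sqrt{J}$-clean if and only if $n=1$. There the reverse implication is exactly \cite[Theorem 2.13]{sainiczechoslovak}, since no full matrix ring of size $n\geq 2$ over a $\sqrt{J}U$ ring is strongly $\sqrt{J}$-clean, and the forward implication is the trivial observation $M_1(R)=R$. I would therefore correct the conclusion from ``$\sqrt{J}$-clean'' to ``strongly $\sqrt{J}$-clean'' and record the one-line proof just described.
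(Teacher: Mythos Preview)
Your analysis is correct: the lemma as stated is false, and the counterexample you give is supplied by the paper itself. Taking $R=\mathbb{F}_2$, which is strongly $\sqrt{J}$-clean by Lemma~\ref{strongly sqrt J clean division ring is F_2}, the ring $M_n(\mathbb{F}_2)$ is $\sqrt{J}$-clean for every $n$ by the paper's own Theorem~\ref{Mn(F2) is sqrt J clean ring} (and is listed among the introductory examples), directly contradicting the biconditional for any $n\geq 2$.

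The paper's ``proof'' is just the sentence preceding the lemma, and that sentence invokes \cite[Theorem~2.13]{sainiczechoslovak} to conclude that $M_n(R)$ is never a \emph{strongly} $\sqrt{J}$-clean ring for $n\geq 2$ --- which is exactly the corrected statement you propose, and nothing more. So your diagnosis that the word ``strongly'' was dropped from the conclusion is precisely what the paper's own argument supports, and your one-line proof of the corrected version coincides with the paper's intended reasoning.
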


\begin{lemma}
    If $R$ is a boolean ring, then $M_n(R)$ is a $\sqrt{J}$-clean ring.
\end{lemma}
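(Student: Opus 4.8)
The plan is to observe that for a boolean $R$ the $\sqrt{J}$-clean condition on $M_n(R)$ is nothing but nil-cleanness, and then to reduce nil-cleanness of $M_n(R)$ to the single case $R=\mathbb{F}_2$.

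First I would note that a boolean ring is commutative and von Neumann regular, so $J(R)=0$ (in particular $2=0$, in agreement with Lemma \ref{2 in J(R)}), whence $J(M_n(R))=M_n(J(R))=0$. Consequently $\sqrt{J(M_n(R))}=\{A\in M_n(R): A^k=0\text{ for some }k\ge 1\}=N(M_n(R))$, so saying that $M_n(R)$ is $\sqrt{J}$-clean is exactly saying that $M_n(R)$ is nil-clean. (Feeding this into Theorem \ref{characterzation of sqrt J clean rings II} gives nothing new, since $M_n(R)/J(M_n(R))=M_n(R)$ and idempotents lift trivially modulo $0$; the reduction really is to nil-cleanness.)

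Second, I would reduce to $R=\mathbb{F}_2$. A boolean ring is the directed union of its finitely generated subrings, and a finitely generated boolean ring is finite, hence isomorphic to some finite power $\mathbb{F}_2\times\cdots\times\mathbb{F}_2$. Since a matrix has only finitely many entries, $M_n(R)$ is correspondingly the directed union of the subrings $M_n(R_i)\cong M_n(\mathbb{F}_2)\times\cdots\times M_n(\mathbb{F}_2)$. Nil-cleanness passes to finite products componentwise (just as $\sqrt{J}$-cleanness does in Lemma \ref{R1 +R2 is sqrt J clean}), and it passes to a directed union of subrings because any single element lies in one of them, where it has a nil-clean decomposition that remains valid in the union. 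Hence it is enough to know that $M_n(\mathbb{F}_2)$ is nil-clean for every $n\ge 1$.

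Third, this last fact is the real content: it is the main theorem of \cite{breaz2013nilcleanmatrixrings} (the case $n=2$ being the Theorem 3 already invoked in this paper), proved by conjugating a matrix over $\mathbb{F}_2$ into rational canonical form and exhibiting each companion block of a prime-power polynomial explicitly as a sum of an idempotent and a nilpotent. I expect that companion-matrix computation to be the main obstacle to a self-contained proof; everything else above is formal. Alternatively, one may quote the known result that $M_n$ of any nil-clean ring is nil-clean and apply it to the (trivially nil-clean) boolean ring $R$ directly, which bypasses the second step entirely.
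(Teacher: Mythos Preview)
Your main argument is correct and matches the paper exactly: the paper's entire proof is the one-line citation of \cite[Corollary~6]{breaz2013nilcleanmatrixrings}, which asserts that $M_n(R)$ is nil-clean for any boolean $R$ (and nil-clean trivially implies $\sqrt{J}$-clean). Your three steps simply unpack how that corollary is proved---reduction via directed unions and finite products to the key case $M_n(\mathbb{F}_2)$.

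One caveat about your closing ``alternatively'': the assertion that $M_n$ of an arbitrary nil-clean ring is again nil-clean is \emph{not} a known theorem---it is precisely Diesl's open question from \cite{diesl2013nilcleanrings}. It has been verified in special situations (over fields in \cite{breaz2013nilcleanmatrixrings}, over division rings in \cite{kocsan2014matrixringoverdivisionringisnilclean}), but there is no proof in the stated generality, so that shortcut cannot substitute for the directed-union reduction you gave first.
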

\begin{proof}
    The proof is clear from \cite[Corollary 6]{breaz2013nilcleanmatrixrings}.
\end{proof}


\begin{lemma}
    For any ring $R$, $R$ is a $\sqrt{J}$-clean ring if and only if $D_n(R)$ is a $\sqrt{J}$-clean ring.
\end{lemma}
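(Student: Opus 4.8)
The plan is to route everything through the surjection $\pi \colon D_n(R) \to R$ that reads off the common diagonal entry. Write $U$ for the set of strictly upper triangular matrices in $D_n(R)$: it is a two-sided ideal, it is nilpotent ($U^{n}=0$), and $\ker\pi = U$, so $D_n(R)/U \cong R$; every element of $D_n(R)$ is $aI+N$ with $a\in R$, $N\in U$. First I would compute $J(D_n(R))$ and $\sqrt{J(D_n(R))}$. Since $U$ is a nilpotent ideal, $U\subseteq J(D_n(R))$ and $J(D_n(R))/U = J(D_n(R)/U)\cong J(R)$, hence $J(D_n(R)) = \pi^{-1}(J(R)) = \{aI+N : a\in J(R),\ N\in U\}$. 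For $A=aI+N$, expanding $A^{k}$ gives $A^{k}=a^{k}I+M$ with $M\in U$ (every monomial of the expansion that involves $N$ at least once is strictly upper triangular), so $\pi(A^{k})=a^{k}$; together with the previous line, $A^{k}\in J(D_n(R))$ iff $a^{k}\in J(R)$. Therefore
\[
\sqrt{J(D_n(R))} \;=\; \pi^{-1}\!\bigl(\sqrt{J(R)}\bigr) \;=\; \{\,aI+N : a\in \sqrt{J(R)},\ N\in U\,\}.
\]

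With these in hand both implications are short. If $D_n(R)$ is $\sqrt{J}$-clean, then $R\cong D_n(R)/U$ is a homomorphic image of $D_n(R)$, so $R$ is $\sqrt{J}$-clean by Lemma \ref{homorphism of sqrt J clean ring}. Conversely, assume $R$ is $\sqrt{J}$-clean and take $A=aI+N\in D_n(R)$; write $a=e+z$ with $e\in Id(R)$ and $z\in\sqrt{J(R)}$. Then $eI\in Id(D_n(R))$, and $zI+N\in\sqrt{J(D_n(R))}$ by the displayed description, so $A=eI+(zI+N)$ is a $\sqrt{J}$-clean decomposition of $A$; hence $D_n(R)$ is $\sqrt{J}$-clean.

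The only step with genuine content is the identity $\sqrt{J(D_n(R))}=\pi^{-1}(\sqrt{J(R)})$; the delicate point is that $\sqrt{J(-)}$ is in general not an ideal, so there is no quotient formula to quote and one must argue elementwise through $\pi(A^{k})=a^{k}$ (which, I note, uses nothing about commutativity of $R$). A less efficient alternative would be to invoke Theorem \ref{characterzation of sqrt J clean rings II}: $D_n(R)/J(D_n(R))\cong R/J(R)$ is $\sqrt{J}$-clean, and idempotents lift modulo $J(D_n(R))$ because they lift modulo the nilpotent ideal $U$ and modulo $J(R)$ in $R$ (again using that $R$ is $\sqrt{J}$-clean); but this rests on the same radical bookkeeping, so the direct decomposition above is the preferable route.
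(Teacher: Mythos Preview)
Your proof is correct and follows the same strategy as the paper's: decompose $A=aI+N$ as $eI+(zI+N)$ for one direction, and read the $\sqrt{J}$-clean decomposition of $a$ off the diagonal for the other. Your version is tidier---you work for general $n$, compute $\sqrt{J(D_n(R))}=\pi^{-1}(\sqrt{J(R)})$ once and for all, and use Lemma~\ref{homorphism of sqrt J clean ring} for the converse, thereby sidestepping the paper's (unneeded and not generally true) claim that every idempotent of $D_n(R)$ has the form $eI$.
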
 
\begin{proof}
    Suppose $R$ is a $\sqrt{J}$-clean ring. Also, let $n=4$ and $a\in R$. Then, there exists an idempotent $e$ and $z\in \sqrt{J(R)}$ satisfying $a=e+z$. Let \begin{equation*}
        A=\begin{pmatrix}
            a & a_1 & a_2 & a_3 \\
            0 & a & a_4 & a_5 \\
            0 & 0 & a & a_6\\
            0 & 0 & 0 & a
        \end{pmatrix}
    \end{equation*} be a matrix in $D_4(R)$.
    Then, \begin{equation*}
        \begin{pmatrix}
            a & a_1 & a_2 & a_3 \\
            0 & a & a_4 & a_5 \\
            0 & 0 & a & a_6\\
            0 & 0 & 0 & a
        \end{pmatrix} = \begin{pmatrix}
            e & 0 & 0 & 0 \\
            0 & e & 0 & 0 \\
            0 & 0 & e & 0\\
            0 & 0 & 0 & e
        \end{pmatrix}+ \begin{pmatrix}
            z & a_1 & a_2 & a_3 \\
            0 & z & a_4 & a_5 \\
            0 & 0 & z & a_6\\
            0 & 0 & 0 & z
        \end{pmatrix}.
    \end{equation*}
    In this decomposition, $\begin{pmatrix}
            e & 0 & 0 & 0 \\
            0 & e & 0 & 0 \\
            0 & 0 & e & 0\\
            0 & 0 & 0 & e
        \end{pmatrix}$ is an idempotent in $D_4(R)$ and as $z \in \sqrt{J(R)}$, for some $m \geq 1$, $z^m \in J(R)$. This results in $\begin{pmatrix}
            z & a_1 & a_2 & a_3 \\
            0 & z & a_4 & a_5 \\
            0 & 0 & z & a_6\\
            0 & 0 & 0 & z
        \end{pmatrix}^m \in J(D_4(R))$ and hence, $\begin{pmatrix}
            z & a_1 & a_2 & a_3 \\
            0 & z & a_4 & a_5 \\
            0 & 0 & z & a_6\\
            0 & 0 & 0 & z
        \end{pmatrix} \in \sqrt{J(D_4(R))}$. Hence, we have obtained a $\sqrt{J}$-clean decomposition of $A$ in $D_4(R)$, as required.
    
    Now, assume $D_4(R)$ is a $\sqrt{J}$-clean ring and $a\in R$. Also, any idempotent of $D_4(R)$ will attain the form $\begin{pmatrix}
        e & 0 & 0 & 0\\ 0 & e & 0 & 0\\ 0 & 0 & e & 0\\ 0 & 0 & 0 & e
    \end{pmatrix}$, where $e\in Id(R)$. Now, if $A=\begin{pmatrix}
            a & a_1 & a_2 & a_3 \\
            0 & a & a_4 & a_5 \\
            0 & 0 & a & a_6\\
            0 & 0 & 0 & a
        \end{pmatrix}$, then let $E=\begin{pmatrix}
        e & 0 & 0 & 0\\ 0 & e & 0 & 0\\ 0 & 0 & e & 0\\ 0 & 0 & 0 & e
    \end{pmatrix} \in Id(D_4(R))$ and $Z=\begin{pmatrix}
            z & a_1 & a_2 & a_3 \\
            0 & z & a_4 & a_5 \\
            0 & 0 & z & a_6\\
            0 & 0 & 0 & z
        \end{pmatrix} \in \sqrt{J(D_4(R))}$ such that \begin{equation*}
            \begin{pmatrix}
            a & a_1 & a_2 & a_3 \\
            0 & a & a_4 & a_5 \\
            0 & 0 & a & a_6\\
            0 & 0 & 0 & a
        \end{pmatrix} = \begin{pmatrix}
            e & a_1 & a_2 & a_3 \\
            0 & e & a_4 & a_5 \\
            0 & 0 & e & a_6\\
            0 & 0 & 0 & e
        \end{pmatrix}+ \begin{pmatrix}
            z & a_1 & a_2 & a_3 \\
            0 & z & a_4 & a_5 \\
            0 & 0 & z & a_6\\
            0 & 0 & 0 & z
        \end{pmatrix}.
        \end{equation*}
        This provides a $\sqrt{J}$-clean decomposition of $a$ satisfying $a=e+z$, where $e\in Id(R)$ and $z\in \sqrt{J(R)}$.  
\end{proof}

\begin{theorem}\label{Mn(F2) is sqrt J clean ring}
    The following are equivalent in a field S:
    \begin{enumerate}
        \item S is isomorphic to $\mathbb{F}_2$.
        \item for every positive integer n, $M_n(S)$ is a $\sqrt{J}$-clean ring.
        \item for some positive integer n, $M_n(S)$ is a $\sqrt{J}$-clean ring.
    \end{enumerate}
\end{theorem}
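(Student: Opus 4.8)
The plan is to prove the cycle $(1)\Rightarrow(2)\Rightarrow(3)\Rightarrow(1)$. The implication $(1)\Rightarrow(2)$ is immediate: since $\mathbb{F}_2$ is a Boolean ring, the Lemma asserting that $M_n(R)$ is $\sqrt{J}$-clean whenever $R$ is Boolean gives at once that $M_n(S)$ is $\sqrt{J}$-clean for every $n$. The implication $(2)\Rightarrow(3)$ is trivial (take any $n$).

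The real content is $(3)\Rightarrow(1)$, and the key simplification is that over a field the $\sqrt{J}$-clean condition on a matrix ring collapses to nil-cleanness. Indeed, if $M_n(S)$ is $\sqrt{J}$-clean for some $n\geq 1$, then since $S$ is a field $M_n(S)$ is simple Artinian, so $J(M_n(S))=0$ and hence $\sqrt{J(M_n(S))}=\{A : A^m=0 \text{ for some } m\geq 1\}=N(M_n(S))$. Thus every matrix in $M_n(S)$ is a sum of an idempotent and a nilpotent.

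Next I would test this on the scalar matrices. Fix $a\in S$ and write $aI_n=E+N$ with $E^2=E$ and $N$ nilpotent, so $E=aI_n-N$. A nilpotent matrix over a field is conjugate to a strictly upper triangular matrix, so the characteristic polynomial of $E$ is $\det\big(xI_n-aI_n+N\big)=(x-a)^n$. On the other hand, an idempotent matrix $E$ of rank $r$ is diagonalizable with all eigenvalues in $\{0,1\}$, so its characteristic polynomial is $x^{n-r}(x-1)^r$. Equating $(x-a)^n=x^{n-r}(x-1)^r$ in $S[x]$ and invoking unique factorization forces $a\in\{0,1\}$: when $n\geq 1$ the right-hand side always has $0$ or $1$ as a root, while the left-hand side has only the root $a$. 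Since $a\in S$ was arbitrary, $S=\{0,1\}$, i.e. $S\cong\mathbb{F}_2$.

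I do not expect a serious obstacle here; the only point needing a word of care is the bookkeeping of the degenerate ranks $r=0$ and $r=n$, which the parenthetical observation above settles. As an alternative opening to $(3)\Rightarrow(1)$ one could note that Lemma \ref{2 in J(R)} already forces $2\in J(M_n(S))=0$, hence $\mathrm{char}\,S=2$; but the scalar-matrix computation does not rely on this and disposes of all fields uniformly.
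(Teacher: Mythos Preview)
Your proof is correct, but your route through $(3)\Rightarrow(1)$ differs from the paper's. Both arguments begin by testing the hypothesis on a scalar matrix, but they finish differently. The paper first invokes Lemma~\ref{2 in J(R)} to obtain $\operatorname{char} S=2$, then observes that for nonzero $r\in S$ the matrix $rI_n$ is a \emph{central} unit; centrality forces any $\sqrt{J}$-clean decomposition $rI_n=E+Z$ to satisfy $EZ=ZE$, so Lemma~\ref{sqrt J U ring} applies and yields $rI_n=I_n+Z$ with $Z\in\sqrt{J(M_n(S))}$; passing to the $(1,1)$ entry gives $r=1+z$ with $z\in\sqrt{J(S)}=0$, whence $r=1$.

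Your argument instead uses that $J(M_n(S))=0$ to identify $\sqrt{J}$-clean with nil-clean on $M_n(S)$, and then pits the two computations of the characteristic polynomial of $E=aI_n-N$ against one another: $(x-a)^n$ because $N$ is nilpotent, versus $x^{\,n-r}(x-1)^r$ because $E$ is idempotent. Unique factorisation in $S[x]$ then forces $a\in\{0,1\}$. This is an elementary linear-algebra argument that is entirely self-contained and does not call on Lemma~\ref{sqrt J U ring} or the $2\in J(R)$ lemma; by contrast, the paper's argument ties the result back into the $\sqrt{J}U$ machinery developed in Section~\ref{strongly sqrt J clean rings}. Both approaches are short; yours is more portable, the paper's more thematic.
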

\begin{proof}
If $S \cong \mathbb{F}_2$, then by \cite{breaz2013nilcleanmatrixrings}, $M_n(S)$ is a nil-clean ring for every positive integer $n$. This results in $M_n(S)$ being a $\sqrt{J}$-clean ring for every positive integer $n$, proving (1) $\Rightarrow$ (2).\\
Now, (2) $\Rightarrow$ (3) is evident.\\
Let $M_n(S)$ be a $\sqrt{J}$-clean ring for some positive integer $n$. Observe that $2I_n$ is a central element, and this gives $char(S)=2$. If $r$ is any non-zero element in $R$, we have $rI_n$ is a unit in a $\sqrt{J}$-clean ring $M_n(S)$. Hence, following Lemma \ref{sqrt J U ring}, $rI_n$ is a $\sqrt{J}U$ element. This gives $rI_n=I_n+ Z$, for some $Z\in \sqrt{J(M_n(S))}$ and hence, $r=1+z$, for some $z\in \sqrt{J(R)}$. Hence, we have $r=1$ and therefore, $S \cong \mathbb{F}_2$, proving (3) $\Rightarrow$ (1).
\end{proof}

Next, we extend the above result to any division ring $D$.

\begin{theorem}Let $D$ be a division ring and $n \geq 1$. Then $M_n(D)$ is a $\sqrt{J}$-clean ring if and only if $D \cong \mathbb{F}_2$. \end{theorem}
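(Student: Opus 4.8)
The plan is to handle the two implications separately. The direction ``$D\cong\mathbb{F}_2\ \Rightarrow\ M_n(D)$ is $\sqrt{J}$-clean'' is immediate: $M_n(\mathbb{F}_2)$ is nil-clean by \cite{breaz2013nilcleanmatrixrings}, and since $N(R)\subseteq\sqrt{J(R)}$ for every ring $R$, it is $\sqrt{J}$-clean. For the converse, suppose $M_n(D)$ is $\sqrt{J}$-clean. Because $M_n(D)$ is simple Artinian, $J(M_n(D))=0$, so $\sqrt{J(M_n(D))}=\{x : x^m=0 \text{ for some } m\}=N(M_n(D))$; thus for this ring ``$\sqrt{J}$-clean'' and ``nil-clean'' coincide. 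When $n=1$ this says every element of $D$ is idempotent (the only nilpotent of a division ring being $0$), so $D\cong\mathbb{F}_2$. When $n\ge 2$ I would pass to the corner ring $eM_n(D)e\cong M_2(D)$ for a rank-two idempotent $e$; since a corner of a nil-clean ring is nil-clean \cite{diesl2013nilcleanrings}, $M_2(D)$ is nil-clean, and it suffices to prove that this forces $D\cong\mathbb{F}_2$.

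For the $2\times 2$ step, suppose for contradiction that $d\in D$ with $d\ne 0$ and $d\ne 1$, and look at $A=\begin{pmatrix}d&0\\0&0\end{pmatrix}$. First I would note that every nilpotent $N\in M_2(D)$ satisfies $N^2=0$: a nonzero nilpotent is non-invertible, hence has rank one, so $N=v\psi$ with $v$ a column and $\psi$ a row, and then $N^k=v(\psi v)^{k-1}\psi$ forces $\psi v=0$, i.e.\ $N^2=0$. Now write a nil-clean decomposition $A=E+N$ with $E=\begin{pmatrix}p&q\\r&s\end{pmatrix}$, $E^2=E$, $N^2=0$, and simply compare entries. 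The $(2,2)$-entries of $E^2=E$ and of $N^2=0$ give $rq+s^2=s$ and $rq+s^2=0$, so $s=0$ and then $rq=0$, hence $q=0$ or $r=0$. If $q=0$, the $(1,1)$-entry of $N^2=0$ reads $(d-p)^2=0$, so $d=p$ (no nonzero square-zero elements in a division ring), while $p^2=p$ forces $p\in\{0,1\}$ --- contradicting $d\ne 0,1$. If $r=0$ (so $q\ne 0$), then $p^2=p$ together with $pq=q$ forces $p=1$, and the $(1,1)$-entry of $N^2=0$ becomes $(d-1)^2=0$, i.e.\ $d=1$, again a contradiction. Hence $D$ has no element other than $0$ and $1$, so $D\cong\mathbb{F}_2$. (For finite $D$ one could alternatively get commutativity from Wedderburn's little theorem and quote Theorem \ref{Mn(F2) is sqrt J clean ring}, but the entrywise argument works for all division rings at once.)

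The step I expect to be the main obstacle is the passage from $M_n(D)$ to $M_2(D)$ --- that is, knowing that nil-cleanness is inherited by corner rings --- together with the fact that over a possibly noncommutative division ring there is no determinant or trace to exploit, so the $2\times 2$ analysis has to be carried out directly on the four entries of $E$; the small lemma that nilpotents in $M_2(D)$ square to zero is the technical ingredient that makes that computation terminate.
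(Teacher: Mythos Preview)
Your reduction from $\sqrt{J}$-clean to nil-clean via $J(M_n(D))=0$ matches the paper exactly, and your $2\times 2$ computation (including the lemma that nilpotents in $M_2(D)$ square to zero) is correct. The gap is the bridge from $M_n(D)$ to $M_2(D)$. You write ``since a corner of a nil-clean ring is nil-clean \cite{diesl2013nilcleanrings}'', but in Diesl's paper this is posed as an \emph{open question}, not a theorem; whether nil-cleanness passes to corner rings $eRe$ was left unresolved there. So the step ``$M_n(D)$ nil-clean $\Rightarrow$ $M_2(D)$ nil-clean'' is not justified by the reference you give, and there is no obvious elementary substitute (there is no ring homomorphism $M_n(D)\to M_2(D)$, and the naive restriction $a\mapsto eae$ destroys both idempotency and nilpotency).

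The paper sidesteps this entirely: after the same observation that $\sqrt{J(M_n(D))}=N(M_n(D))$, it directly invokes \cite[Theorem~3]{kocsan2014matrixringoverdivisionringisnilclean}, which proves for arbitrary $n$ that $M_n(D)$ is nil-clean iff $D\cong\mathbb{F}_2$. Your entrywise analysis is essentially an independent proof of the $n=2$ case of that theorem; to make your route self-contained you would need to run an analogous argument in $M_n(D)$ for general $n$ (as Ko\c{s}an--Lee--Zhou do, by more structural means), rather than relying on the corner reduction.
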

\begin{proof}
If $D \cong \mathbb{F}_2$, then by Theorem \ref{Mn(F2) is sqrt J clean ring}, $M_n(\mathbb{F}_2)$ is a $\sqrt{J}$-clean ring. On the other hand, if $M_n(D)$ is a division ring, from \cite{UsqrtJringsshiksha}, $N(M_n(D))= \sqrt{J(M_n(D))}.$ Hence, if $D$ is a division ring, $M_n(D)$ is a $\sqrt{J}$-clean ring, if and only if it is a nil-clean ring. The proof further follows from \cite[Theorem 3]{kocsan2014matrixringoverdivisionringisnilclean}. 
\end{proof}

Recollect that \begin{equation*}T(R,M)=\left \{  \begin{pmatrix}
    a & m \\ 0 & a
\end{pmatrix} : a \in R , m \in M \right \}\end{equation*} is a subring of \begin{equation*} T(R,R,M)=\left \{ \begin{pmatrix}
    a & M \\ 0 & b
\end{pmatrix} : a, b \in R ; \ m\in M \right \}.\end{equation*} Also, $T(R,M)$ is equivalent to the trivial extension of $R$ and $M$, i.e., $R\propto M =\{ (a,m): a\in R , m\in M \}$. It forms a ring with operations component-wise addition and multiplication defined by \begin{equation*} (r_1,m_1)(r_2,m_2)=(r_1r_2, r_1m_2+m_1r_2). \end{equation*} Also, $T(R,R) \cong R[x]/(x^2)$ and \begin{equation*}
    \sqrt{J(T(R,M))}=\{ (z,m) : z\in \sqrt{J(R)}, \ m\in M \}.
\end{equation*}

\begin{lemma}\label{T(R,M) is sqrt J -clean}
For a ring $R$ and bimodule $M$ over it, $T(R,M)$ is a $\sqrt{J}$-clean ring if and only if $R$ is a $\sqrt{J}$-clean ring.
\end{lemma}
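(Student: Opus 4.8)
The plan is to prove the two implications directly, exhibiting an explicit $\sqrt{J}$-clean decomposition for the nontrivial direction.

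For the forward implication, I would use that the first-coordinate projection $\pi\colon T(R,M)\to R$, $(a,m)\mapsto a$, is a surjective ring homomorphism (this is immediate from the multiplication rule $(r_1,m_1)(r_2,m_2)=(r_1r_2,\,r_1m_2+m_1r_2)$). Hence, if $T(R,M)$ is $\sqrt{J}$-clean, then its homomorphic image $R=\pi(T(R,M))$ is $\sqrt{J}$-clean by Lemma \ref{homorphism of sqrt J clean ring}.

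For the converse, assume $R$ is $\sqrt{J}$-clean and pick an arbitrary $(a,m)\in T(R,M)$. Write $a=e+z$ with $e\in Id(R)$ and $z\in\sqrt{J(R)}$, and set
\begin{equation*}
(a,m)=(e,0)+(z,m).
\end{equation*}
Then $(e,0)$ is an idempotent of $T(R,M)$, since $(e,0)^2=(e^2,\,e\cdot 0+0\cdot e)=(e,0)$. Moreover, using the description $\sqrt{J(T(R,M))}=\{(z,m): z\in\sqrt{J(R)},\ m\in M\}$ recorded just before the statement, we get $(z,m)\in\sqrt{J(T(R,M))}$. Thus $(a,m)=(e,0)+(z,m)$ is a $\sqrt{J}$-clean decomposition, and $T(R,M)$ is $\sqrt{J}$-clean.

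The only ingredient that is not purely formal is the identification of $\sqrt{J(T(R,M))}$, and this is exactly what makes the converse work; but since the excerpt already records that formula, there is essentially no obstacle. (Were one to verify it from scratch, the point is that $0\propto M$ is a nil ideal of $T(R,M)$ with quotient $R$, so $J(T(R,M))=\{(j,m): j\in J(R),\ m\in M\}$, and $(z,m)^k$ has top-left entry $z^k$, whence $z^k\in J(R)$ forces $(z,m)^k\in J(T(R,M))$.) So the proof is short and the main work is bookkeeping with the trivial-extension structure.
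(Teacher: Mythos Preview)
Your proof is correct and follows essentially the same route as the paper's. For $R\Rightarrow T(R,M)$ you both use the decomposition $(a,m)=(e,0)+(z,m)$ together with the recorded description of $\sqrt{J(T(R,M))}$; for $T(R,M)\Rightarrow R$ the paper reads off $a=e+z$ directly from a $\sqrt{J}$-clean decomposition of $(a,m)$, which is exactly what your projection-plus-Lemma~\ref{homorphism of sqrt J clean ring} argument amounts to once unpacked.
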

\begin{proof}
    If $R$ is a $\sqrt{J}$-clean ring, let $\begin{pmatrix}
        a & m \\ 0 & a
    \end{pmatrix} \in T(R,M)$, where $a$ and $m$ are elements of $R$ and $M$, respectively. As $a\in R$, let the $\sqrt{J}$-clean decomposition of $a$ be $a=e+z$, where $e$ is an idempotent and $z\in \sqrt{J(R)}$. This results in
    \begin{equation*}
        \begin{pmatrix}
            a & m \\ 0 & a
        \end{pmatrix}= \begin{pmatrix}
            e & 0 \\ 0 & e
        \end{pmatrix} + \begin{pmatrix}
            z & m \\ 0 & z
        \end{pmatrix},
    \end{equation*} which is a $\sqrt{J}$-clean decomposition of $\begin{pmatrix}
        a & m \\ 0 & a
    \end{pmatrix}$. Hence, $T(R,M)$ is a $\sqrt{J}$-clean ring.

    Let $T(R,M)$ be a $\sqrt{J}$-clean ring. Then, for any $\begin{pmatrix}
        a & m\\ 0 & a
    \end{pmatrix} \in T(R,M)$, 
    \begin{equation*}
        \begin{pmatrix}
            a & m \\ 0 & a
        \end{pmatrix}= \begin{pmatrix}
            e & m_1 \\ 0 & e
        \end{pmatrix} + \begin{pmatrix}
            z & m_2 \\ 0 & z
        \end{pmatrix},
    \end{equation*}
    where, $\begin{pmatrix}
            e & m_1 \\ 0 & e
        \end{pmatrix}$ is an idempotent and $\begin{pmatrix}
            z & m_2 \\ 0 & z
        \end{pmatrix} \in \sqrt{J(T(R,M))}$. As a result, we have, $e^2=e$ and $z\in \sqrt{J(R)}$ satisfying $a=e+z$. Hence, $R$ is a $\sqrt{J}$-clean ring.
\end{proof}

\begin{lemma}
    Let $R$ be any ring. Then $R \propto R$ is a $\sqrt{J}$-clean ring iff $T(R,R)$ is a $\sqrt{J}$-clean ring iff $R[x]/(x^2)$ is a $\sqrt{J}$-clean ring.
\end{lemma}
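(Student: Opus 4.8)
The plan is to observe that this statement is an immediate consequence of the identifications already recorded in the text together with Lemma~\ref{T(R,M) is sqrt J -clean}. First I would recall that the three rings $R\propto R$, $T(R,R)$, and $R[x]/(x^2)$ are pairwise isomorphic: the trivial extension $R\propto M$ is \emph{defined} to be the ring $T(R,M)$, so taking $M=R$ gives $R\propto R\cong T(R,R)$, and the assignment $\begin{pmatrix} a & b\\ 0 & a\end{pmatrix}\mapsto a+bx \bmod x^2$ is a ring isomorphism $T(R,R)\cong R[x]/(x^2)$, the routine point being that it respects multiplication via $(a+bx)(c+dx)=ac+(ad+bc)x$ modulo $x^2$, which matches the product in $T(R,R)$.

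Next I would note that being a $\sqrt{J}$-clean ring is invariant under ring isomorphism: an isomorphism and its inverse are both surjective ring homomorphisms, so by Lemma~\ref{homorphism of sqrt J clean ring} (homomorphic images of $\sqrt{J}$-clean rings are $\sqrt{J}$-clean) one of two isomorphic rings is $\sqrt{J}$-clean precisely when the other is. Applying this to the two isomorphisms above shows that $R\propto R$, $T(R,R)$, and $R[x]/(x^2)$ are simultaneously $\sqrt{J}$-clean or simultaneously not, which is exactly the chain of equivalences asserted in the lemma.

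Finally, I would add a brief remark identifying the common condition: by Lemma~\ref{T(R,M) is sqrt J -clean} with $M=R$, the ring $T(R,R)$ is $\sqrt{J}$-clean if and only if $R$ is $\sqrt{J}$-clean, so each of the three rings in the statement is $\sqrt{J}$-clean exactly when $R$ is. I do not expect any real obstacle here; the only thing requiring a modicum of care is writing down the isomorphism $T(R,R)\cong R[x]/(x^2)$ explicitly and verifying it is multiplicative, and this is the standard computation just indicated.
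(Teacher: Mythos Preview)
Your proposal is correct and matches the paper's intent: the paper states this lemma without proof, having already recorded just before Lemma~\ref{T(R,M) is sqrt J -clean} that $T(R,M)$ is equivalent to the trivial extension $R\propto M$ and that $T(R,R)\cong R[x]/(x^2)$, so the equivalences are immediate from these isomorphisms (and, as you note, Lemma~\ref{T(R,M) is sqrt J -clean} identifies the common condition with $R$ being $\sqrt{J}$-clean). Your added justification via Lemma~\ref{homorphism of sqrt J clean ring} that $\sqrt{J}$-cleanness is an isomorphism invariant is a harmless elaboration of what the paper leaves implicit.
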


 Next, we recollect the Morita context. We define $(m,n) \mapsto mn$ for the context product $M \times N \to X$ and $(n,m) \mapsto nm$ for the context product $N \times M \to Y$. We let $_XM_Y$ and $_YN_X$ as bimodules over the rings $X$ and $Y$. Then a 4-tuple ring $\begin{pmatrix}
       X & M \\ N & Y
   \end{pmatrix}$ is called a Morita context, with usual matrix operations. If $MN$ and $NM$ both are 0, i.e., the context products are trivial, then the Morita context is called a trivial Morita context or a Morita context with zero pairings, as seen in \cite{haghany1999hopficity}. Also, \begin{equation*}
       \begin{pmatrix}
       X & M \\ N & Y
   \end{pmatrix} \cong T(X \times Y, M \oplus N).
   \end{equation*} with trivial morita context $\begin{pmatrix}
       X & M \\ N & Y
   \end{pmatrix}$. Additionally, \cite{kocsan2015pp, marianne1987rings} can be seen for further study on the Morita context. Examples include formal triangular matrices and $T_n(R)$.

\begin{theorem}
    Let M be a bimodule over the rings $R_1$ and $R_2$. Then $\begin{pmatrix}
        R_1 & M \\ 0 & R_2
    \end{pmatrix}$ is a $\sqrt{J}$-clean ring if and only if $R_1$ and $R_2$ are $\sqrt{J}$-clean rings.
\end{theorem}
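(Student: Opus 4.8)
The plan is to reduce the statement to two results already in hand: the characterization of $\sqrt{J}$-clean trivial extensions (Lemma \ref{T(R,M) is sqrt J -clean}) and the behaviour of $\sqrt{J}$-cleanness under finite direct products (Lemma \ref{R1 +R2 is sqrt J clean}). Since the lower-left corner is $0$, the Morita context $\begin{pmatrix} R_1 & M \\ 0 & R_2 \end{pmatrix}$ is trivial, so by the identification recorded above it is isomorphic to $T(R_1 \times R_2, M)$, where $M$ is viewed as an $(R_1 \times R_2)$-bimodule via $(a,b)\cdot m = am$ and $m\cdot(a,b)=mb$. First I would confirm that this is indeed a ring isomorphism by matching the two multiplication rules, which is a one-line check. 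Then Lemma \ref{T(R,M) is sqrt J -clean} says $\begin{pmatrix} R_1 & M \\ 0 & R_2 \end{pmatrix}$ is $\sqrt{J}$-clean if and only if $R_1 \times R_2$ is, and Lemma \ref{R1 +R2 is sqrt J clean} says $R_1 \times R_2$ is $\sqrt{J}$-clean if and only if both $R_1$ and $R_2$ are; chaining these gives the theorem.

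Alternatively one can argue directly. Put $T = \begin{pmatrix} R_1 & M \\ 0 & R_2 \end{pmatrix}$ and $I = \begin{pmatrix} 0 & M \\ 0 & 0 \end{pmatrix}$; then $I^2 = 0$ so $I \subseteq J(T)$, $T/I \cong R_1 \times R_2$, and in fact $J(T) = \begin{pmatrix} J(R_1) & M \\ 0 & J(R_2) \end{pmatrix}$. For the forward direction, each projection $T \to R_i$ is a surjective ring homomorphism, so $R_1$ and $R_2$ are $\sqrt{J}$-clean by Lemma \ref{homorphism of sqrt J clean ring}. For the converse, given $\begin{pmatrix} a & m \\ 0 & b \end{pmatrix} \in T$, take $\sqrt{J}$-clean decompositions $a = e_1 + z_1$ and $b = e_2 + z_2$ and write $\begin{pmatrix} a & m \\ 0 & b \end{pmatrix} = \begin{pmatrix} e_1 & 0 \\ 0 & e_2 \end{pmatrix} + \begin{pmatrix} z_1 & m \\ 0 & z_2 \end{pmatrix}$. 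The first summand is an idempotent of $T$. For the second, pick $n$ with $z_1^n \in J(R_1)$ and $z_2^n \in J(R_2)$ (possible since each $J(R_i)$ is an ideal); then $\begin{pmatrix} z_1 & m \\ 0 & z_2 \end{pmatrix}^n$ has diagonal entries $z_1^n, z_2^n$ and $(1,2)$-entry in $M$, hence lies in $\begin{pmatrix} J(R_1) & M \\ 0 & J(R_2) \end{pmatrix} = J(T)$, so $\begin{pmatrix} z_1 & m \\ 0 & z_2 \end{pmatrix} \in \sqrt{J(T)}$, giving the required decomposition.

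The only point that needs care — and hence the main obstacle — is the computation $J(T) = \begin{pmatrix} J(R_1) & M \\ 0 & J(R_2) \end{pmatrix}$ (equivalently, pinning down the bimodule structure in the trivial-extension description) together with the observation that the diagonal idempotents of $T$ are exactly $\begin{pmatrix} e_1 & 0 \\ 0 & e_2 \end{pmatrix}$ with $e_i \in Id(R_i)$. Both are standard facts about formal triangular matrix rings, and with them the description $\sqrt{J(T)} = \{\begin{pmatrix} z_1 & m \\ 0 & z_2 \end{pmatrix} : z_i \in \sqrt{J(R_i)},\ m \in M\}$ used above is immediate; everything else is bookkeeping. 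The slick route through Lemma \ref{T(R,M) is sqrt J -clean} and Lemma \ref{R1 +R2 is sqrt J clean} sidesteps even this, since those lemmas already absorb the needed radical computation, so I would present that as the main proof and leave the direct argument as a remark.
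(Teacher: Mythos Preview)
Your main proof is exactly the paper's argument: identify $\begin{pmatrix} R_1 & M \\ 0 & R_2 \end{pmatrix}$ with $T(R_1\times R_2, M)$ and then chain Lemma~\ref{T(R,M) is sqrt J -clean} with Lemma~\ref{R1 +R2 is sqrt J clean}. Your alternative direct argument is also correct and a bit more informative (it makes $J(T)$ and $\sqrt{J(T)}$ explicit), but the paper does not include it.
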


\begin{proof}
    Let $\begin{pmatrix}
        R_1 & M \\ 0 & R_2
    \end{pmatrix}$ be a $\sqrt{J}$-clean ring. Hence, $T(R_1 \times R_2 , M)$ is a $\sqrt{J}$-clean ring. Following Lemma \ref{T(R,M) is sqrt J -clean}, this gives $R_1 \times R_2$ is a $\sqrt{J}$-clean ring, which results in $R_1$ and $R_2$ as a $\sqrt{J}$-clean ring. 

    Now, let $R_1$ and $R_2$ be $\sqrt{J}$-clean rings. Hence, following Lemma \ref{R1 +R2 is sqrt J clean} and Lemma \ref{T(R,M) is sqrt J -clean}, we obtain that $T(R_1 \times R_2, M)$ is a $\sqrt{J}$-clean ring. This proves $\begin{pmatrix}
        R_1 & M \\ 0 &R_2
    \end{pmatrix}$ is $\sqrt{J}$-clean.
\end{proof}

\begin{corollary}
    If $n\geq 2$, if $T_n(R)$ is a $\sqrt{J}$-clean ring, then $R$ is also a $\sqrt{J}$-clean ring.
\end{corollary}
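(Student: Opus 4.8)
The plan is to realise $T_n(R)$ as a generalized (formal) $2\times 2$ upper triangular matrix ring and then invoke the preceding Theorem. Concretely, partitioning an $n\times n$ upper triangular matrix so that its top-left block has size $1\times 1$ yields a ring isomorphism
\[
T_n(R)\;\cong\;\begin{pmatrix} R & M \\ 0 & T_{n-1}(R)\end{pmatrix},
\]
where $M = R^{1\times(n-1)}$ is the set of row vectors of length $n-1$ over $R$, regarded as an $(R,T_{n-1}(R))$-bimodule via left entrywise scalar multiplication and right multiplication of a row vector by an $(n-1)\times(n-1)$ upper triangular matrix. The bimodule axioms, and the fact that the block multiplication agrees with ordinary matrix multiplication, are routine to verify since both actions are restrictions of multiplication in $T_n(R)$.

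First I would record this isomorphism; for $n=2$ it is simply $T_2(R)\cong\begin{pmatrix} R & R \\ 0 & R\end{pmatrix}$, and in general it is the standard block decomposition. Then, taking $R_1 = R$, $R_2 = T_{n-1}(R)$ and $M$ as above, the Theorem on formal triangular matrix rings says that $\begin{pmatrix} R_1 & M \\ 0 & R_2\end{pmatrix}$ is $\sqrt{J}$-clean if and only if both $R_1$ and $R_2$ are $\sqrt{J}$-clean. Applying the "only if" direction to the hypothesis that $T_n(R)$ is $\sqrt{J}$-clean immediately gives that $R$ (and, incidentally, $T_{n-1}(R)$) is $\sqrt{J}$-clean, which is the claim. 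Alternatively one could argue by induction on $n$, peeling off one row and column at a time and invoking the Theorem at each step, with base case $n=2$ being exactly the Theorem applied to $\begin{pmatrix} R & R \\ 0 & R\end{pmatrix}$.

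I do not expect a genuine obstacle here: the content of the corollary is carried entirely by the Theorem, and the only thing needing attention is the (well-known) identification of $T_n(R)$ with a $2\times 2$ formal triangular matrix ring over $R$ and $T_{n-1}(R)$, together with the compatibility $(r\cdot x)\cdot A = r\cdot(x\cdot A)$ making $M$ a bona fide bimodule. It is worth noting that the converse also follows from the Theorem, so in fact $T_n(R)$ is $\sqrt{J}$-clean if and only if $R$ is; the corollary records only the implication that is used elsewhere.
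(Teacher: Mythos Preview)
Your proposal is correct and is exactly the approach the paper intends: the Corollary is stated immediately after the Theorem on formal triangular matrix rings $\begin{pmatrix} R_1 & M \\ 0 & R_2 \end{pmatrix}$ with no separate proof, so the implicit argument is precisely the block decomposition $T_n(R)\cong\begin{pmatrix} R & R^{1\times(n-1)} \\ 0 & T_{n-1}(R)\end{pmatrix}$ together with the ``only if'' direction of that Theorem. Your remark that the converse also follows (by induction, or equivalently by iterating the block decomposition) is correct as well.
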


Let $R$ be a ring $s\in C(R)$. Define $K_s (R)=\begin{pmatrix}
    R & R\\
    R & R
\end{pmatrix}$. Then $K_s(R)$ forms a ring with the component-wise addition and multiplication defined as $\begin{pmatrix}
    a_1 & x_1 \\
    y_1 & b_1
\end{pmatrix} \begin{pmatrix}
    a_2 & x_2 \\
    y_2 & b_2
\end{pmatrix}=\begin{pmatrix}
    a_1 a_2 +sx_1y_2 & a_1x_2+x_1b_2\\
    y_1a_2+b_1y_1 & sy_1x_2 + b_1b_2
\end{pmatrix}$. Here $s$ is referred to as the multiplier of $K_s(R)$. In fact, $K_s(R)$ is a special kind of Morita context. When $R=P=Q=N=M$ in a Morita context $\begin{pmatrix}
    P & M\\
    N & Q
\end{pmatrix}$, it is referred to as the generalized matrix ring over the ring $R$.

\begin{lemma}
    Let R be a ring and $s\in J(R)$. If $K_s(R)$ is a $\sqrt{J}$-clean ring, then $R$ is a $\sqrt{J}$-clean ring.
\end{lemma}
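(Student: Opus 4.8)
The plan is to verify the two hypotheses of Theorem~\ref{characterzation of sqrt J clean rings II} for $R$, namely that $R/J(R)$ is $\sqrt{J}$-clean and that idempotents lift modulo $J(R)$; the conclusion then follows at once from that theorem.

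For the first hypothesis, I would exploit that $s\in J(R)$, so that $sxy\in J(R)$ for all $x,y\in R$. Consequently the assignment
\[
\varphi\colon K_s(R)\longrightarrow \big(R/J(R)\big)\times\big(R/J(R)\big),\qquad
\begin{pmatrix} a & x\\ y & b\end{pmatrix}\longmapsto \big(a+J(R),\,b+J(R)\big)
\]
is well defined, additive and unital, and it is multiplicative because the multiplier $s$ enters a product in $K_s(R)$ only through the diagonal terms $sx_1y_2$ and $sy_1x_2$, both of which lie in $J(R)$. Thus $\varphi$ is a surjective ring homomorphism, so $(R/J(R))\times(R/J(R))$ is $\sqrt{J}$-clean by Lemma~\ref{homorphism of sqrt J clean ring}, and then $R/J(R)$ is $\sqrt{J}$-clean by Lemma~\ref{R1 +R2 is sqrt J clean}.

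For the second hypothesis, I would use the corner $E=\begin{pmatrix}1&0\\0&0\end{pmatrix}\in Id(K_s(R))$. On $EK_s(R)E=\bigl\{\begin{pmatrix}a&0\\0&0\end{pmatrix}:a\in R\bigr\}$ every product has all off-diagonal entries zero, so the multiplier never intervenes and $EK_s(R)E\cong R$ as rings. Since $K_s(R)$ is $\sqrt{J}$-clean it is clean, hence an exchange ring \cite{liftingidempotentsandexchangerings}; and since the class of exchange rings is closed under passage to corners, $R\cong EK_s(R)E$ is an exchange ring. Exchange rings lift idempotents modulo every (two-sided) ideal, in particular modulo $J(R)$, which is the remaining hypothesis of Theorem~\ref{characterzation of sqrt J clean rings II}, and we are done.

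The main obstacle is this second step. It is not enough to know that $R/J(R)$ is $\sqrt{J}$-clean, and one cannot simply extract a $\sqrt{J}$-clean decomposition of $a\in R$ from one of $\begin{pmatrix}a&0\\0&0\end{pmatrix}$ in $K_s(R)$, because the $(1,1)$-entry of an idempotent of $K_s(R)$ is in general only idempotent modulo $J(R)$ (indeed modulo $sR$) rather than a genuine idempotent of $R$ — so one is thrown back on precisely the lifting problem one wants to solve. Some transfer mechanism for the lifting property is therefore unavoidable, and the exchange-ring argument above is the cleanest one I see. An alternative is to compute $J(K_s(R))=\begin{pmatrix}J(R)&R\\ R&J(R)\end{pmatrix}$ directly (a short $2\times 2$ Schur-complement/unit computation using $s\in J(R)$), whence $K_s(R)/J(K_s(R))\cong (R/J(R))\times(R/J(R))$; but lifting an idempotent of $R/J(R)$ through this quotient and back into the corner still requires essentially the same input.
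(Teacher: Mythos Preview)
Your argument is correct and takes a genuinely different route from the paper. The paper proceeds directly: given $a\in R$, it decomposes $\begin{pmatrix}a&0\\0&0\end{pmatrix}$ in $K_s(R)$ and simply writes the idempotent part as $\begin{pmatrix}e&0\\0&0\end{pmatrix}$ and the $\sqrt{J}$-part as $\begin{pmatrix}z&0\\0&0\end{pmatrix}$, then reads off $a=e+z$ in $R$. But no reason is given for why a $\sqrt{J}$-clean decomposition of $\begin{pmatrix}a&0\\0&0\end{pmatrix}$ must have this special block shape --- this is exactly the obstacle you isolate in your final paragraph, and the paper's proof, as written, does not address it.

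Your detour through Theorem~\ref{characterzation of sqrt J clean rings II} avoids the problem entirely. The surjection $\varphi$ onto $(R/J(R))\times(R/J(R))$ is genuinely a ring map because $s\in J(R)$ kills the cross terms, so Lemmas~\ref{homorphism of sqrt J clean ring} and~\ref{R1 +R2 is sqrt J clean} give $R/J(R)$ $\sqrt{J}$-clean; and the chain ``$\sqrt{J}$-clean $\Rightarrow$ clean $\Rightarrow$ exchange, corners of exchange rings are exchange, exchange rings lift idempotents modulo every ideal'' is standard and supplies the lifting hypothesis. This costs more machinery than the paper's one-line extraction, but it is actually complete.
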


\begin{proof}
   If $K_s(R)$ is a $\sqrt{J}$-clean ring, let $\begin{pmatrix}
    a & 0 \\ 0 & 0
\end{pmatrix} \in K_s(R)$. Then there exists an idempotent $\begin{pmatrix}
    e & 0 \\ 0 &0 
\end{pmatrix}$ and $\begin{pmatrix}
    z & 0 \\ 0 & 0
\end{pmatrix} \in \sqrt{J(K_s(R))}$ satisfying
\begin{equation*}
    \begin{pmatrix}
    a & 0 \\ 0 & 0
\end{pmatrix} = \begin{pmatrix}
    e & 0 \\ 0 &0 
\end{pmatrix} + \begin{pmatrix}
    z & 0 \\ 0 & 0
\end{pmatrix}. \end{equation*} 
This gives $e\in Id(R)$ and $z\in \sqrt{J(R)}$ and hence, we get, for any element $a$ in $R$, $a=e+z$ and this proves $R$ is a $\sqrt{J}$-clean ring.
\end{proof}

Let $R$ be a ring, $s\in C(R)$ and $n \geq 2$. Then $M_n(R;s)$ denotes the formal matrix ring over $R$ defined by $s$. It is the ring of all $n\times n$ matrices under usual addition and multiplication defined as $(a_{ij}) (b_{ij})=(c_{ij})$, where $(a_{ij})$, $(b_{ij})$ are the $n \times n$ matrices with entries from $R$. Here, $c_{ij}=\sum_{k=1}^n \delta^{\delta_{ikj}}a_{ik}b_{kj}$, where $\delta_{ikj}=1+\delta_{ik}-\delta_{ij}-\delta_{jk}$ with $\delta$ representing the Kronecker's delta function. It is evident that if $n=1$, then $M_n (R;s)$ is the ring $R$.

\begin{lemma}
    \cite[Proposition 11, 32]{tang2013classofformalmatrixrings} Let $R$ be a ring. Then 
    \begin{enumerate}
        \item If $A$ is any matrix from $M_n(R;s)$, then $A$ is a unit if and only if $\det_s (A)$ is invertible.
        \item If $s\in Z(R)$, then 
        \begin{equation*}
            J(M_n(R;s))=\begin{pmatrix}
                J_s(R) & J_s(R) & \dots & J_s(R)\\ J_s(R) & J_s(R) & \dots & J_s(R) \\ \vdots & \vdots & & \vdots \\ J_s(R) & J_s(R) & \dots & J_s(R)
            \end{pmatrix}.
        \end{equation*}
    \end{enumerate}
\end{lemma}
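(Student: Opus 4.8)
The plan is to reproduce Tang and Zhou's analysis of the formal matrix ring $M_n(R;s)$ from \cite{tang2013classofformalmatrixrings}; below I only sketch the shape of the two arguments, since the substance is bookkeeping.

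For part (1) I would work with the $s$-determinant $\det_s$, the weighted Leibniz sum (products formed in a fixed order, as in \cite{tang2013classofformalmatrixrings}) in which each permutation term carries the power of $s$ that the corresponding product of matrix units acquires in $M_n(R;s)$, so that the identity term has weight $0$ and $\det_1=\det$. The first step is to build an $s$-adjugate $\operatorname{adj}_s(A)\in M_n(R;s)$, whose $(i,j)$ entry is $(-1)^{i+j}$ times the $s$-determinant of the minor obtained by deleting row $j$ and column $i$, rescaled by the appropriate power of $s$, and to verify the $s$-weighted Laplace identity $A\,\operatorname{adj}_s(A)=\operatorname{adj}_s(A)\,A=\det_s(A)\,I_n$ in $M_n(R;s)$. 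Granting this, one implication is free: if $\det_s(A)\in U(R)$ then $\det_s(A)^{-1}\operatorname{adj}_s(A)$ is a two-sided inverse of $A$. For the converse I would prove multiplicativity of the $s$-determinant, $\det_s(AB)=\det_s(A)\det_s(B)$ (an $s$-weighted Cauchy--Binet computation, which needs the usual care over a noncommutative base), so that $AA^{-1}=A^{-1}A=I_n$ yields $\det_s(A)\det_s(A^{-1})=\det_s(A^{-1})\det_s(A)=1$, hence $\det_s(A)\in U(R)$.

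For part (2) I would set $\mathcal{J}$ to be the set of matrices in $M_n(R;s)$ all of whose entries lie in the ideal $J_s(R)$ of $R$ attached to $s$, and argue in three steps. First, $\mathcal{J}$ is a two-sided ideal of $M_n(R;s)$: because $s$ is central, every extra factor of $s$ produced by the twisted multiplication only drives entries deeper into $J_s(R)$, so $\mathcal{J}$ is closed under the ring operations and under multiplication by arbitrary matrices. Second, $\mathcal{J}\subseteq J(M_n(R;s))$: for $A\in\mathcal{J}$ and any $B$, I would expand $\det_s(I_n-AB)$ by the weighted Leibniz formula; by the defining property of $J_s(R)$ the identity term lies in $1+J(R)$, while every non-identity permutation term carries an off-diagonal factor paired with an unused power of $s$ and hence (moving the central $s$ next to that factor) lands in $J(R)$, so $\det_s(I_n-AB)\in 1+J(R)\subseteq U(R)$, whence $I_n-AB$ is a unit by part (1); thus $\mathcal{J}$ is quasi-regular and therefore contained in the radical. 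Third, $J(M_n(R;s))\subseteq\mathcal{J}$: given $A=(a_{ij})$ in the radical, I would sandwich $A$ between suitable matrix units so as to isolate, up to a fixed power of $s$, each entry $a_{ij}$ in a single position; these products stay in the radical (an ideal), and extracting their quasi-regularity forces $a_{ij}\in J_s(R)$. Combining the last two inclusions gives the claimed description of $J(M_n(R;s))$.

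The hard part is not conceptual but computational: every step above depends on correctly tracking the exponents of $s$ generated when matrix units are multiplied in $M_n(R;s)$, i.e.\ on verifying the $s$-weighted Laplace expansion, the multiplicativity of $\det_s$, and the matrix-unit coordinate extraction, each a finite but delicate induction on $n$. Since precisely these computations are carried out in \cite[Propositions 11 and 32]{tang2013classofformalmatrixrings}, in the paper we simply cite that reference.
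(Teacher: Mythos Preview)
The paper gives no proof of this lemma at all: it is stated purely as a citation of \cite[Propositions 11 and 32]{tang2013classofformalmatrixrings}, exactly as you yourself note in your final sentence. Your sketch of the $s$-adjugate/Laplace and quasi-regularity arguments is a reasonable outline of what happens in that reference, so there is nothing to compare and nothing to add beyond the citation.
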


\begin{theorem}
    In a ring $R$, let $s\in Z(R) \cap J(R)$. If $M_n(R[[x]]/(x^m);s)$ is a $\sqrt{J}$-clean ring, then R is a $\sqrt{J}$-clean ring.
\end{theorem}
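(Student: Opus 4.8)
The plan is to strip the extra structure off one layer at a time: first reduce from $M_n(R[[x]]/(x^m);s)$ to $A:=R[[x]]/(x^m)$ by passing to a corner, and then reduce from $A$ to $R$ by killing a nilpotent ideal. Write $\mathfrak n=(x)/(x^m)\trianglelefteq A$. Then $\mathfrak n^{m}=0$, so $\mathfrak n\subseteq J(A)$ and $A/\mathfrak n\cong R$; moreover $s\in Z(R)\subseteq Z(A)$, so $M_n(A;s)$ is a genuine ring. If I can show $A$ is $\sqrt J$-clean, then, since $R\cong A/\mathfrak n$ is a homomorphic image of $A$, Lemma~\ref{homorphism of sqrt J clean ring} immediately gives that $R$ is $\sqrt J$-clean. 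Thus the whole statement reduces to the implication: if $M_n(A;s)$ is $\sqrt J$-clean and $s\in Z(A)$, then $A$ is $\sqrt J$-clean.

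To prove this I would look at the idempotent $g=E_{11}=\operatorname{diag}(1,0,\dots,0)$ of $M_n(A;s)$. A routine computation with the formal product of \cite{tang2013classofformalmatrixrings} (the multiplier $s$ never contributes to an entry that lands in the $(1,1)$ position) shows that $gM_n(A;s)g=\{\operatorname{diag}(a,0,\dots,0):a\in A\}$ and that this corner ring is isomorphic to $A$. So it suffices to prove the general fact that a corner $gSg$ of a $\sqrt J$-clean ring $S$ is again $\sqrt J$-clean, and then take $S=M_n(A;s)$.

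For the corner fact I would use Theorem~\ref{characterzation of sqrt J clean rings II}: it is enough that $gSg/J(gSg)$ be $\sqrt J$-clean and that idempotents lift modulo $J(gSg)$. Since $J(gSg)=gJ(S)g$, the quotient $gSg/J(gSg)$ is a corner of $\overline S=S/J(S)$; but $\overline S$ is $\sqrt J$-clean by Lemma~\ref{homorphism of sqrt J clean ring} and has $J(\overline S)=0$, so $\sqrt{J(\overline S)}=N(\overline S)$ and $\overline S$ is nil-clean, whence the corner $gSg/J(gSg)$ is nil-clean (nil-cleanness being a Morita invariant) and in particular $\sqrt J$-clean. For the lifting, $S$ is clean (every $\sqrt J$-clean ring is clean), hence an exchange ring; the exchange property descends to corners, and in an exchange ring idempotents lift modulo every ideal, in particular modulo $J(gSg)$. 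Theorem~\ref{characterzation of sqrt J clean rings II} then yields that $gSg$ is $\sqrt J$-clean, which completes the reduction and hence the proof.

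The step I expect to be the real obstacle is precisely this passage to the corner ring. It is tempting to mimic the $K_s(R)$ computation above — write $\operatorname{diag}(a,0,\dots,0)=F+Z$ in $M_n(A;s)$ with $F$ idempotent and $Z\in\sqrt{J(M_n(A;s))}$ and apply $X\mapsto gXg$; the image of $Z$ does land in $\sqrt{J(gSg)}$ by \cite{sainiczechoslovak}, but $gFg$ need not be idempotent, so the projected decomposition is not a $\sqrt J$-clean decomposition. This is the familiar reason that ``$eRe$ is clean whenever $R$ is clean'' is a genuine theorem rather than a triviality, and it is why I route the argument through Theorem~\ref{characterzation of sqrt J clean rings II} together with the corner-stability of the exchange property and of nil-cleanness. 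Note that in this route only the centrality of $s$ is used, via $gM_n(A;s)g\cong A$; the extra hypothesis $s\in J(R)$ (giving $s\in J(A)$, so $J_s(A)=J(A)$ and the structure result of \cite{tang2013classofformalmatrixrings} applies cleanly) is what one would instead exploit in an alternative argument that descends through $M_n(A;s)/J(M_n(A;s))$.
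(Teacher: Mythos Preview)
Your two–step reduction (pass from $M_n(A;s)$ to the corner $A=R[[x]]/(x^m)$ via $g=E_{11}$, then from $A$ to $R$ by killing $(x)/(x^m)$) is exactly the skeleton the paper uses. Ironically, for the first step the paper does \emph{precisely} the naive projection you warn against: it simply asserts that $\operatorname{diag}(a,0,\dots,0)$ has a $\sqrt J$-clean decomposition of the shape $\operatorname{diag}(e,0,\dots,0)+\operatorname{diag}(z,0,\dots,0)$ and reads off $a=e+z$, without explaining why the idempotent and the $\sqrt J$-part of an arbitrary decomposition must be supported in the $(1,1)$ slot. So the gap you diagnose is real, and it is the paper's gap.

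Your proposed repair, however, trades this gap for another. The step ``$\overline S$ nil-clean $\Rightarrow \bar g\,\overline S\,\bar g$ nil-clean'' is not a theorem you can cite: whether nil-cleanness passes to corners (or to matrix rings) is a well-known open problem, raised already by Diesl in \cite{diesl2013nilcleanrings}, so calling it ``a Morita invariant'' is unjustified. Everything else in your corner argument is standard --- $J(gSg)=gJ(S)g$, clean $\Rightarrow$ exchange, corner-stability of exchange, lifting of idempotents in exchange rings --- but the nil-clean step does not go through as written.

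The route that actually closes the argument is the one you mention only parenthetically at the end, and it is where $s\in J(R)$ earns its keep. Since $s\in J(R)\subseteq J(A)$, one has (via \cite{tang2013classofformalmatrixrings}) that $M_n(A;s)/J(M_n(A;s))\cong\bigl(A/J(A)\bigr)^n$ as a direct \emph{product} of rings, because off-diagonal entries lie in the radical. Hence $A/J(A)$ is $\sqrt J$-clean by Lemmas~\ref{homorphism of sqrt J clean ring} and~\ref{R1 +R2 is sqrt J clean}. Since $A\cong gM_n(A;s)g$ is a corner of a clean (hence exchange) ring, it is itself exchange and idempotents lift modulo $J(A)$; Theorem~\ref{characterzation of sqrt J clean rings II} then gives that $A$ is $\sqrt J$-clean, and $R\cong A/\mathfrak n$ follows. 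This version uses only corner-stability of the exchange property, which \emph{is} a theorem, and sidesteps the nil-clean question entirely.
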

\begin{proof}
    If $R[[x]]/(x^m)$ is a $\sqrt{J}$-clean ring, then on considering a ring epimorphism $\alpha : R[[x]]/(x^m)\to R$ given by $\alpha(\bar{p})=p(0)$, we obtain that $R$ is a $\sqrt{J}$-clean ring. So, the proof reduces to proving $R[[x]]/(x^m)$ is a $\sqrt{J}$-clean ring and for that, it is sufficient to show that if $M_n(R;s)$ is a $\sqrt{J}$-clean ring, then $R$ is a $\sqrt{J}$-clean ring. As $M_n(R;s)$ is a $\sqrt{J}$-clean rinf, then let $\begin{pmatrix}
        a & 0 \\ 0 & 0
    \end{pmatrix} \in M_n(R;s)$ possesses a $\sqrt{J}$-clean decomposition given by: \begin{equation*}
        \begin{pmatrix}
            a & 0 \\ 0 & 0
        \end{pmatrix} = \begin{pmatrix}
            e & 0 \\ 0 & 0
        \end{pmatrix} + \begin{pmatrix}
            z & 0 \\ 0 & 0
        \end{pmatrix},
    \end{equation*} where $\begin{pmatrix}
            e & 0 \\ 0 & 0
        \end{pmatrix}$ is an idempotent and $\begin{pmatrix}
            z & 0 \\ 0 & 0
        \end{pmatrix} \in \sqrt{J(M_n(R;s))}$. As a result, we get $e^2=e$ and $z\in \sqrt{J(R)}.$ This provides a $\sqrt{J}$-clean decomposition of $a$, given by $a=e+z$ and hence, $R$ is a $\sqrt{J}$-clean ring.
\end{proof}

\end{document}